\documentclass[12pt]{article}

\usepackage{amsfonts, amsmath, amsthm}
\usepackage{geometry}
\geometry{top=1.2in, bottom=1.2in, left=1.2in, right=1.2in}
\usepackage{hyperref}
\usepackage{graphicx}
\usepackage{color}

\newcounter{dummy} \numberwithin{dummy}{section}
\newtheorem{theorem}[dummy]{Theorem}
\newtheorem{proposition}[dummy]{Proposition}
\newtheorem{lemma}[dummy]{Lemma}
\newtheorem{corollary}[dummy]{Corollary}

\newtheorem{remark}{Remark}[section]

\def\({\left(}
\def\){\right)}

\def\R{\mathbb{R}}
\def\Z{\mathbb{Z}}

\def\E{\mathbf{E}}
\def\P{\mathbf{P}}

\def\W{\mathcal{W}}
\def\Siginv{{\Sigma^{-1}}}
\def\sigmamin{\sigma_{\text{min}}}
\def\dci{\Delta_{CI}}
\newcommand{\KL}[2]{\mathcal{D}(#1 \,\|\, #2)}

\DeclareMathOperator{\Vol}{Vol}

\usepackage{bigints}

\begin{document}

\title{A high-dimensional CLT in $\W_2$ distance with near optimal
  convergence rate} \author{Alex Zhai \\ Stanford University}
\maketitle

\begin{abstract}
  Let $X_1, \ldots , X_n$ be i.i.d. random vectors in $\R^d$ with
  $\|X_1\| \le \beta$. Then, we show that
  \[ \frac{1}{\sqrt{n}}\(X_1 + \ldots + X_n\) \]
  converges to a Gaussian in quadratic transportation (also known
    as ``Kantorovich'' or ``Wasserstein'') distance at a rate of $O
  \( \frac{\sqrt{d} \beta \log n}{\sqrt{n}} \)$, improving a result of
  Valiant and Valiant. The main feature of our theorem is that the
  rate of convergence is within $\log n$ of optimal for $n, d
  \rightarrow \infty$.
\end{abstract}

\section{Introduction}

The central limit theorem states that if $X_1, X_2, \ldots , X_n$ are
independent and identically distributed random variables, then $S_n =
\frac{1}{\sqrt{n}} \sum_{i = 1}^n X_i$ is approximately Gaussian. It
is well-known that by various metrics the distance from Gaussian
decays at a rate of $n^{-1/2}$; for example, the celebrated
Berry-Esseen bound states that $| \P(S \le t) - \P(Z \le t) | = O\(
n^{-1/2} \E |X_i|^3 \)$. Moreover, this bound is optimal to within a
constant.

The same principle holds if we allow the $X_i$ to be $\R^d$-valued,
and an extensive literature was developed, tracing back at least to
the 1940's \cite{B45} (see also \cite{B77} and references therein),
around establishing multivariate central limit theorems with good
convergence rates. One new consideration that arises in the
multivariate setting is that the convergence rate is expressed in
terms of not only $n$ but also the dimension $d$. This dependence on
$d$, and in particular when $d$ is growing with $n$, was studied by
Nagaev \cite{N76}, Senatov \cite{S80}, G\"otze \cite{G91}, Bentkus
\cite{B03}, and Chen and Fang \cite{CF11}, among others. These works
focus on convergence in probabilities of convex sets, which we will
call convergence in convex-indicator (CI) distance.

In addition to being a natural question, obtaining good dependence on
dimension has recently been of interest in various
applications. Bubeck and Ganguly \cite{BG15} prove a central limit
theorem for Wishart matrices (relevant to random geometric graphs, see
also \cite{BDER16}), and Chernozhukov, Chetverikov, and Kato
\cite{CCK13} prove a central limit theorem for maxima of sums of
independent random vectors (with applications in high-dimensional
statistical inference). Another relevant work is that of Valiant and
Valiant \cite{VV10a}\footnote{See \cite{VV10b} for the full version.},
who prove central limit theorems for transportation distance and
generalized multinomial distributions and use them to construct lower
bounds for estimating entropy.

In this paper, we prove a multivariate central limit theorem for
quadratic transportation distance whose rate of convergence is within
$\log n$ of optimal in both the number of summands $n$ and the
dimension $d$, improving the result of Valiant and Valiant
\cite{VV10a}. To our knowledge, this is the first general multivariate
central limit theorem whose convergence rate is optimal to within
logarithmic factors in both $n$ and $d$, albeit not for the CI metric
that is most commonly studied in the literature.\footnote{It should be
  noted that the bounds obtained by Bubeck and Ganguly \cite{BG15} are
  also optimal to within logarithmic factors, but they are specific to
  Wishart matrices. We mention also the work of Bentkus and G\"otze
  \cite{BG96}, which obtains optimal bounds for quadratic forms under
  certain somewhat specialized assumptions.} Additionally, we believe
that the method of proof based on Talagrand's transportation
inequality, described in Section \ref{subsec:proof-idea}, is of
independent interest. We also note that in certain regimes our result
implies stronger bounds in CI distance than what is known in the
existing literature, as elaborated in Section
\ref{subsec:ci-comparison}.

To state the result, recall that for two probability measures $\mu$
and $\nu$ on $\R^d$ and a number $p \ge 1$, the \emph{$L^p$
  transportation distance}\footnote{Other names appearing in the
  literature include ``Monge-Kantorovich distance'', ``Kantorovich
  distance'', and ``Wasserstein distance''. We refer to \cite{V13} for
  a historical discussion of the concept.}  $\W_p(\mu, \nu)$ is
defined to be
\[ \W_p(\mu, \nu) = \( \inf_{\gamma \in \Gamma(\mu, \nu)} \int \|x - y\|^p \,d\gamma(x, y) \)^{\frac{1}{p}}, \]
where $\Gamma(\mu, \nu)$ is the space of all probability measures on
$\R^d \times \R^d$ with $\mu$ and $\nu$ as marginals. In other words,
$\W_p(\mu, \nu)$ measures how closely $\mu$ and $\nu$ may be
coupled. If $X$ and $Y$ are random variables with distributions $\mu$
and $\nu$, respectively, we will also write
\[ \W_p(X, Y) = \W_p(\mu, \nu). \]
Our main result is the following theorem concerning the $L^2$ (or
  ``quadratic'') transportation distance.
\begin{theorem} \label{thm:main}
  Let $X_1, \ldots , X_n$ be independent random vectors with mean
  zero, covariance $\Sigma$, and $\|X_i\| \le \beta$ almost surely for
  each $i$. Let $S_n = \frac{1}{\sqrt{n}} \sum_{i = 1}^n X_i$, and let
  $Z$ be a Gaussian with covariance $\Sigma$. Then,
  \[ \W_2\( S_n, Z \) \le \frac{5 \sqrt{d} \beta (1 + \log n)}{\sqrt{n}}. \]
\end{theorem}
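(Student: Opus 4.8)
The plan is to realize $S_n$, after a mild smoothing, as the terminal value of a continuous-time martingale driven by a Brownian motion, to represent the Gaussian $Z$ by the analogous martingale driven by the \emph{same} Brownian motion, and to bound $\W_2(S_n,Z)$ by the $L^2$ distance between the two endpoints; by the It\^o isometry this distance equals a time-integral of the squared discrepancy between the two diffusion coefficients. This is essentially the mechanism behind the stochastic proof of Talagrand's transportation inequality, and the point is that for a normalized sum the diffusion coefficient is, with high probability, extremely close to the one belonging to the Gaussian.

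Two reductions come first. We may assume $d<n$: otherwise coupling $S_n$ and $Z$ by independent variables already gives $\W_2(S_n,Z)^2\le 2\E\|S_n\|^2+2\E\|Z\|^2=4\,\mathrm{tr}\,\Sigma\le 4\beta^2$ (using $\mathrm{tr}\,\Sigma=\E\|X_1\|^2\le\beta^2$), so $\W_2(S_n,Z)\le 2\beta\le 5\sqrt d\,\beta(1+\log n)/\sqrt n$. Next, since $S_n$ need not have a density, fix a small $\epsilon>0$, let $G_1,\dots,G_n$ be i.i.d.\ $N(0,I)$ independent of the $X_i$, and put $X_i^\epsilon=\sqrt{1-\epsilon^2}\,X_i+\epsilon\sqrt\Sigma\,G_i$: then $X_i^\epsilon$ has mean $0$, covariance $\Sigma$, and a smooth density, $S_n^\epsilon:=\frac1{\sqrt n}\sum_i X_i^\epsilon=\sqrt{1-\epsilon^2}\,S_n+\epsilon\sqrt\Sigma\,G$ with $G\sim N(0,I)$, and a direct coupling gives $\W_2(S_n,S_n^\epsilon)^2\le\bigl((1-\sqrt{1-\epsilon^2})^2+\epsilon^2\bigr)\mathrm{tr}\,\Sigma\le 2\epsilon^2\beta^2$. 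It therefore suffices to bound $\W_2(S_n^\epsilon,Z)$ and add back this error, choosing $\epsilon$ polynomially small in $n$ at the end.

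For the main estimate I would use the stochastic-localization martingale embedding (\`a la Eldan; equivalently, the F\"ollmer-type process from Lehec's proof of Talagrand's inequality) of a random vector $W$ with smooth density $\rho$, mean $0$, covariance $\Sigma$. With $(B_t)_{t\ge 0}$ a standard $d$-dimensional Brownian motion, let $\mu_t$ be $W$ tilted by $\exp\bigl(\langle x,\cdot\rangle-\tfrac t2\|x\|^2\bigr)$, coupled to $B$ in the standard way, so that the barycenter $W_t:=\int x\,d\mu_t(x)$ is a martingale with $W_0=0$, $W_\infty\stackrel{d}{=}W$, and $dW_t=\Gamma_t\,dB_t$, where $\Gamma_t=\mathrm{Cov}(\mu_t)$. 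For $W=Z\sim N(0,\Sigma)$ one gets the \emph{deterministic} coefficient $\Gamma_t^Z=(\Sigma^{-1}+tI)^{-1}$; for $W=S_n^\epsilon$ write $\Gamma_t^\epsilon=\mathrm{Cov}(\mu_t^\epsilon)$. Driving both processes by the same $B$ and using that each starts at $0$, the coupling of the two endpoints together with the It\^o isometry gives
\[
  \W_2(S_n^\epsilon,Z)^2\;\le\;\E\int_0^\infty\Bigl\|\,\Gamma_t^\epsilon-(\Sigma^{-1}+tI)^{-1}\,\Bigr\|_{HS}^2\,dt .
\]

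It remains --- and here is the main obstacle --- to show that the diffusion coefficient $\Gamma_t^\epsilon=\mathrm{Cov}(\mu_t^\epsilon)$ of the sum concentrates around the Gaussian coefficient $(\Sigma^{-1}+tI)^{-1}$, with a squared fluctuation of order $\beta^2/n$ (times a factor decaying in $t$), so that the integral above is of order $\frac{d\beta^2}{n}$ up to a logarithmic factor in $n$. The logarithm comes from the range of times that genuinely contributes: only $t$ up to about $\epsilon^{-2}$ matters, since for larger $t$ the Gaussian smoothing has already made $\mu_t^\epsilon$ essentially Gaussian, and with $\epsilon$ polynomially small in $n$ this range has logarithmic weight on the relevant scale. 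The difficulty in the concentration step is that the tilt $\exp(\langle x,\cdot\rangle-\tfrac t2\|x\|^2)$ does not factor over the $n$ independent summands of $S_n^\epsilon$, so one cannot simply tensorize; instead one must control $\mathrm{Cov}(\mu_t^\epsilon)$ through the regularity of the density of $S_n^\epsilon$ --- for instance, thanks to the Gaussian smoothing the log-density of $S_n^\epsilon$ has Hessian bounded below, in the positive-semidefinite order, by $-\epsilon^{-2}\Sigma^{-1}$, which via a Brascamp--Lieb / Cram\'er--Rao type inequality controls $\mathrm{Cov}(\mu_t^\epsilon)$ --- and, in the regime where that Hessian bound is too weak, one must exploit the independence of the summands together with the $1/\sqrt n$ normalization to extract the $n^{-1}$ gain. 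Inserting the concentration estimate into the displayed inequality, optimizing over $\epsilon$, and adding the smoothing error from the second reduction yields $\W_2(S_n,Z)\le 5\sqrt d\,\beta(1+\log n)/\sqrt n$, with the constant $5$ absorbing the $O(1)$ factors along the way.
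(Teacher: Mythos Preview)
Your proposal takes a genuinely different route from the paper. The paper runs a Lindeberg-type scheme: it replaces the $X_i$ one at a time by independent Gaussians, controlling each swap by a density-level estimate $\W_2(Z_{n-1}+X,\,Z_n)\le 5\sqrt{k}\,\beta/n$ (its key Lemma), proved by computing the density ratio explicitly and feeding it into a variant of Talagrand's transportation inequality for anisotropic Gaussians. A double induction on $n$ and on the number $k$ of retained coordinates then gives the theorem; the $\log n$ arises as the harmonic sum $\sum_{m\le n}1/m$ over the swaps, and the hypothesis $n\ge 5\beta^2/\sigma_{\min}^2$ in the key lemma is handled by peeling off low-variance coordinates via a crude $L^2$ bound.

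Your F\"ollmer/stochastic-localization embedding is a reasonable plan --- a version of it was later carried out in full (Eldan--Mikulincer--Zhai), even removing the logarithm --- but as written it contains a genuine gap, which you yourself flag as ``the main obstacle''. After the It\^o-isometry step, everything rests on showing that the random diffusion coefficient $\Gamma_t^\epsilon=\mathrm{Cov}(\mu_t^\epsilon)$ concentrates around its Gaussian counterpart $(\Sigma^{-1}+tI)^{-1}$ with integrated squared error of order $d\beta^2/n$ (up to a logarithm), and you have not proved this. The two hints you offer do not add up to an argument: the Brascamp--Lieb/Cram\'er--Rao bound coming from the $\epsilon$-smoothing gives only a one-sided control of $\Gamma_t^\epsilon$, at a scale governed by $\epsilon^{-2}$ rather than by $1/n$, which is far from the two-sided closeness you need; and the sentence ``one must exploit the independence of the summands together with the $1/\sqrt n$ normalization'' is precisely the missing idea --- as you note, the exponential tilt does not factor over the summands, so this is where all the work lies, not a step to be waved through. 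Until that concentration estimate is supplied with explicit dependence on $t$, $d$, $\beta$, $n$, and $\epsilon$, the displayed inequality yields no bound, and nothing else in the proposal substitutes for it.
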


This bound improves by a factor of $\sqrt{d}$ the result of Valiant
and Valiant \cite{VV10a}, who obtain under the same assumptions a $O
\( \frac{d \beta \log n}{\sqrt{n}} \)$ bound for $\W_1$ distance. In
fact, Theorem \ref{thm:main} is within a $\log n$ factor of optimal,
in the sense that one cannot have a convergence rate faster than
$O(\sqrt{d} \beta / \sqrt{n})$, as shown by the following proposition.
\begin{proposition} \label{prop:lower-bound}
  Let $(X_i)_{i=1}^n$, $S_n$, $Z$, and $\beta$ be as in Theorem
  \ref{thm:main}. Suppose further that the $X_i$ take values in the
  lattice $\beta \Z^d$. Then,
  \[ \liminf_{n \rightarrow \infty} \sqrt{n} \W_2(S_n, Z) \ge \frac{\sqrt{d} \beta}{4}. \]
\end{proposition}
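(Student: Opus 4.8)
The plan is to use that $S_n$ is supported on a fine lattice and therefore cannot be coupled too tightly to the absolutely continuous Gaussian $Z$. Since each $X_i$ lies in $\beta\Z^d$, the vector $\sqrt n\, S_n = \sum_{i=1}^n X_i$ lies in $\beta\Z^d$, so $S_n$ takes values in $L_n := \tfrac{\beta}{\sqrt n}\Z^d$. The first step is the deterministic lower bound
\[ \W_2(S_n, Z)^2 \;\ge\; \E\big[\,\mathrm{dist}(Z, L_n)^2\,\big], \]
which holds because in any coupling of $S_n$ with $Z$ the first coordinate always lies in $L_n$, so $\|S_n - Z\| \ge \mathrm{dist}(Z, L_n)$ pointwise, and one then takes the infimum over couplings. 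Crucially the right-hand side depends only on $\Sigma$ (through the law of $Z$) and on the spacing $a_n := \beta/\sqrt n$, not on the law of the $X_i$, so the proposition reduces to showing $\liminf_{n\to\infty} n\,\E[\mathrm{dist}(Z, L_n)^2] \ge d\beta^2/16$; in fact the estimate below produces the exact limit $d\beta^2/12$, and since $1/\sqrt{12}\approx 0.289 > 1/4$ there is a comfortable margin.

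Next I would reduce to one dimension. The nearest point of $a\Z^d$ to a given $y$ is obtained coordinatewise, so $\mathrm{dist}(y, L_n)^2 = \sum_{j=1}^d \rho_{a_n}(y_j)^2$ with $\rho_a(t) := \mathrm{dist}(t, a\Z) \in [0, a/2]$, and hence $\E[\mathrm{dist}(Z, L_n)^2] = \sum_{j=1}^d \E[\rho_{a_n}(Z_j)^2]$ by linearity. Here one uses a mild non-degeneracy assumption on $\Sigma$: in the lattice setting one checks directly that $\Sigma = \beta^2\,\mathrm{diag}(p_1, \dots, p_d)$ with $p_j = \P(X_1 \in \{\pm\beta e_j\})$, so this just asks that the $X_i$ genuinely spread over all $d$ coordinates — a coordinate with $\Sigma_{jj} = 0$ has $Z_j \equiv 0$ and contributes nothing, so some such hypothesis is unavoidable, and it holds for instance when each $X_i$ is uniform on $\{\pm\beta e_1, \dots, \pm\beta e_d\}$. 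Then each $Z_j \sim N(0, \Sigma_{jj})$ has a bounded continuous unimodal density, and it remains to establish the one-dimensional asymptotic: if $W$ has such a density $f$, then $\E[\rho_a(W)^2] = \tfrac{a^2}{12}(1 + o(1))$ as $a \to 0$.

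For that, partition $\R$ into the cells $[a(k - \tfrac12), a(k+\tfrac12))$ centered at the lattice points, write $w = ak + s$ with $s \in [-a/2, a/2)$ on the $k$-th cell, and substitute $s = a u$:
\[ \E\big[\rho_a(W)^2\big] = \sum_{k \in \Z} \int_{-a/2}^{a/2} s^2\, f(ak + s)\, ds = a^2 \int_{-1/2}^{1/2} u^2 \Big( \sum_{k \in \Z} a\, f\big(a(k+u)\big) \Big)\, du. \]
The inner sum is a Riemann sum of mesh $a$ for $\int_\R f = 1$, so it converges to $1$ as $a \to 0$; for unimodal $f$ it is also bounded, uniformly in $u$, by $1 + a\|f\|_\infty \le 2$ once $a$ is small. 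Dominated convergence then gives $\E[\rho_a(W)^2]/a^2 \to \int_{-1/2}^{1/2} u^2\, du = \tfrac{1}{12}$, which is the claimed asymptotic. Summing over $j$ yields $\E[\mathrm{dist}(Z, L_n)^2] = \tfrac{d a_n^2}{12}(1 + o(1)) = \tfrac{d\beta^2}{12 n}(1 + o(1))$, and combined with the first display this gives $\liminf_n \sqrt n\, \W_2(S_n, Z) \ge \tfrac{\sqrt d\, \beta}{\sqrt{12}} \ge \tfrac{\sqrt d\, \beta}{4}$.

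The only real content is the one-dimensional limit $\E[\rho_a(W)^2] \to a^2/12$; everything else is bookkeeping, and I do not foresee a genuine obstacle. Because of the slack between $1/\sqrt{12}$ and $1/4$ one can even bypass the sharp Riemann-sum computation: it suffices to lower-bound $\E[\rho_a(W)^2]$ by keeping only the $\Theta(1/a)$ cells contained in a fixed window $[-R, R]$ on which $f \ge c > 0$, which gives $\E[\rho_a(W)^2] \ge \tfrac{a^2}{12}\cdot 2cR\cdot(1 - o(1))$, and $2cR$ can be pushed arbitrarily close to $1$ by taking $R$ large. The one thing to keep an eye on throughout is the non-degeneracy of $\Sigma$ flagged above.
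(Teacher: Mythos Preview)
Your argument is correct and shares the same starting point as the paper: bound $\W_2(S_n,Z)$ below by the expected distance from $Z$ to the lattice $L_n = \tfrac{\beta}{\sqrt n}\Z^d$. The executions differ. You use $\W_2(S_n,Z)^2 \ge \E[\mathrm{dist}(Z,L_n)^2]$, decompose this coordinatewise as $\sum_j \E[\rho_{a_n}(Z_j)^2]$, and evaluate each one-dimensional term via a Riemann-sum argument, landing on the sharp constant $1/\sqrt{12}$. The paper instead works with the weaker $\W_2(S_n,Z) \ge \E[\mathrm{dist}(Z,L_n)]$, stays in $\R^d$, lower-bounds the average of $d_L$ over a fundamental cube by $\tfrac12\ell_n\sqrt d$ via $\|x\|\ge d^{-1/2}\sum_j|x_j|$, and handles the Gaussian density by uniform continuity on a fixed window carrying mass at least $1/2$; this yields exactly the stated constant $1/4$. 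Your route is a bit cleaner and gives the better constant; the paper's avoids the dominated-convergence step. Both proofs tacitly require $\Sigma$ to be non-degenerate (the paper uses that the density $\rho$ is everywhere positive), and you are right to flag this explicitly. The diagonality of $\Sigma$ you derive is correct but not actually needed for your argument: only the one-dimensional marginals $Z_j\sim N(0,\Sigma_{jj})$ enter the computation, and those are Gaussian regardless of the off-diagonal structure.
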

The proof is routine and is given in Appendix
\ref{subsec:lower-bound-proof}; it is based on the fact that a typical
point in $\R^d$ will be a distance $O(\sqrt{d} \beta)$ from the
closest point in $\beta \Z^d$.

Several other works in the literature have studied central limit
theorems for $\W_p$ distance. In the multivariate setting, the recent
work of Bonis \cite{B15} proves a $O(1/\sqrt{n})$ convergence rate for
$\W_2$ distance under the assumption $\E \|X_1\|^4 < \infty$. However,
Bonis' result does not have an explicit dependence on the dimension,
which is the main point of this paper.

We mention also the work of Rio (see \cite{R09}, \cite{R11}), who
analyzed for the one-dimensional setting convergence in $\W_p$
distance under various moment assumptions. For $\W_2$, he proves a
$O(1/\sqrt{n})$ convergence rate under the assumption of finite fourth
moments; we refer the reader to \cite{R09} for statements about other
values of $p$. An alternative proof of Rio's result for $\W_2$ was
given by Bobkov \cite{B13} (see also \cite{BCG14}). We note that
Talagrand's transportation inequality also makes an appearance in
\cite{B13}, but the way it is used is substantially different from the
approach of this paper.

The above literature leads us to believe that Theorem \ref{thm:main}
can be improved to remove the $\log n$ factor (this was also
conjectured in \cite{VV10a}). We remark that the extra $\log n$ factor
in our proof comes from a harmonic series arising from repeated
applications of Lemma \ref{lem:increment} below.

\subsection{Comparison with convex-indicator bounds} \label{subsec:ci-comparison}

For two measures $\mu$ and $\nu$ on $\R^d$, we define the
convex-indicator (CI) distance $\dci$ by
\[ \dci(\mu, \nu) = \sup_{A \subset \R^d \text{ convex}} |\mu(A) - \nu(A)|, \]
and as with $\W_p$ distance, we will write $\dci(X, Y) = \dci(\mu,
\nu)$ if $X$ has distribution $\mu$ and $Y$ has distribution $\nu$. As
mentioned earlier, CI distance is perhaps the most widely studied
metric in the high-dimensional central limit theorem literature (see
e.g. \cite{S68}, \cite{N76}, \cite{S80}, \cite{G91}, \cite{BH10},
\cite{B03}). The best convergence rate seems to be due to Bentkus
\cite{B03}. For simplicity, we state his theorem in the i.i.d. case
(the original paper contains a somewhat more general formulation).

\begin{theorem}[Bentkus, i.i.d. case of Theorem 1.1 in \cite{B03}] \label{thm:bentkus}
  Let $X_1, \ldots , X_n$ be i.i.d. $\R^d$-valued random variables
  with mean zero, identity covariance, and $\( \E \|X_1\|^3
  \)^\frac{1}{3} = \beta_3$. Let $S_n = \frac{1}{\sqrt{n}} \sum_{i =
    1}^n X_i$, and let $Z$ be a standard Gaussian. Then, there is a
  constant $C$ such that
  \[ \dci(S_n, Z) \le \frac{C d^{1/4} \beta_3^3}{\sqrt{n}}. \]
\end{theorem}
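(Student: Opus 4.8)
\emph{Overview.} The plan is to run the classical Lindeberg swapping argument, but applied to a carefully smoothed version of the indicator $1_A$ of a convex set $A$, and then to recover a bound for $1_A$ itself by controlling the Gaussian measure of thin shells around the boundary $\partial A$.

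\emph{Step 1 (smoothing reduction).} Fix a convex set $A \subseteq \R^d$ and a parameter $\epsilon > 0$. Construct smooth functions $h_-, h_+ : \R^d \to [0,1]$ with $h_- \le 1_A \le h_+$, where $h_+$ is supported in $A^\epsilon = \{x : \mathrm{dist}(x,A) \le \epsilon\}$ and $h_- = 1$ on $A^{-\epsilon}$; for instance, take each $h_\pm$ to be the convolution of the indicator of a slightly dilated or eroded convex set with a Gaussian kernel $\phi_\epsilon$ of covariance $\epsilon^2 I$. Then
\[ \P(S_n \in A) - \P(Z \in A) \le \( \E h_+(S_n) - \E h_+(Z) \) + \( \P(Z \in A^\epsilon) - \P(Z \in A) \), \]
and symmetrically from below with $h_-$ and $A^{-\epsilon}$. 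The ``shell'' terms $\P(Z \in A^\epsilon) - \P(Z \in A^{-\epsilon})$ are controlled by a classical estimate (Ball, Nazarov), namely that the Gaussian surface area of any convex body in $\R^d$ is $O(d^{1/4})$, and is sharp in $d$; hence these terms are $O(d^{1/4}\epsilon)$. Thus it suffices to bound $|\E h(S_n) - \E h(Z)|$ for $h = h_\pm$ and then optimize over $\epsilon$.

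\emph{Step 2 (Lindeberg swap, moving derivatives onto the heat kernel).} Replace the i.i.d.\ summands $X_i/\sqrt{n}$ by independent Gaussians $Z_i/\sqrt{n}$ (with $\Var(Z_i) = I$), one at a time. After the first $i-1$ swaps the argument of $h$ is $W_i + X_i/\sqrt{n}$ versus $W_i + Z_i/\sqrt{n}$, where $W_i$ decomposes as a Gaussian of covariance $\frac{i-1}{n} I$ plus an independent partial sum of the remaining $X_j$. Integrating out the Gaussian part replaces $h$ by the mollified function $g_i = h * \phi_{(i-1)/n}$; expanding $g_i$ to second order and using that $X_i$ and $Z_i$ have matching mean and covariance, the increment at step $i$ is bounded by $\E\|X_1\|^3 / n^{3/2}$ times a suitable average of $|D^3 g_i|$ over a neighbourhood of $W_i$. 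Two features keep the sum over $i = 1, \ldots, n$ small: (i) $g_i = h * \phi_{(i-1)/n}$ becomes smoother as $i$ grows, so its derivatives can be moved onto the heat kernel at a cost of roughly $\sqrt{n/i}$ per derivative; and (ii) the relevant quantity is not $\|D^3 g_i\|_\infty$ but the average of $|D^3 g_i|$ against the law of $W_i$, which is close to a Gaussian of covariance $\frac{n-i}{n} I$, and for a smoothed indicator of a convex set this average is small because the region where the derivatives are large is a thin shell around $\partial A$ whose measure is again governed by the $O(d^{1/4})$ Gaussian surface area. Summing the resulting harmonic-type series in $i$ and combining with Step 1, one optimizes $\epsilon$ to obtain $\dci(S_n, Z) = O\( d^{1/4}\beta_3^3 / \sqrt{n} \)$.

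\emph{Main obstacle.} The delicate point is exactly the interplay in Step 2. A naive sup-norm Lindeberg estimate forces $\|D^3 h\|_\infty$ of order $\epsilon^{-3}$ and, after optimizing $\epsilon$ against the $O(d^{1/4}\epsilon)$ shell term, yields only a rate of order $n^{-1/4}$. Recovering the true $n^{-1/2}$ rate requires both choosing the smoothing $h$ so that its high derivatives are concentrated in a shell of Gaussian measure $O(d^{1/4}\epsilon)$, and carefully tracking that at step $i$ the relevant average is taken against a measure close to a Gaussian of covariance $\frac{n-i}{n} I$, so that one effectively integrates $|D^3 g_i|$ against that Gaussian instead of taking its maximum. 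Turning this into a quantitative ``smoothing lemma'' — one bounding $|\E h(S_n) - \E h(Z)|$ on smoothed convex indicators in terms of $\dci(S_n, Z)$ together with the $d^{1/4}$ perimeter bound — is the technical heart of the argument, and the step I expect to be hardest. I note also that the $\W_2$ estimate of Theorem~\ref{thm:main} gives no direct help here: converting a $\W_2$ coupling bound into a CI bound via a shell estimate loses a power of the error and yields only an $n^{-1/3}$-type rate, so a dedicated smoothing and Lindeberg argument is needed.
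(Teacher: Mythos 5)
First, note that the paper does not prove this statement at all: Theorem \ref{thm:bentkus} is quoted from Bentkus \cite{B03} purely for comparison with Corollary \ref{cor:convex}, so there is no in-paper proof to compare against, and your proposal has to stand on its own as a reconstruction of Bentkus's argument.

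As written, it does not yet stand. The outline (smooth the convex indicator, Lindeberg swap, control shell terms via the $O(d^{1/4})$ Gaussian surface-area bound of Ball/Nazarov, optimize over $\epsilon$) is the right family of ideas, but the step you yourself flag as the ``technical heart'' is exactly where the difficulty lies, and your sketch of it contains a circularity. At step $i$ of the swap, the average of $|D^3 g_i|$ must be taken against the law of $W_i$, which is a Gaussian plus a sum of the remaining $X_j/\sqrt{n}$; this law is close to Gaussian only in the sense of the very theorem being proved. To bound the mass that $W_i$ puts on the thin shell around $\partial A$ one needs either an a priori concentration estimate or---as in the actual arguments of Bentkus and G\"otze---an induction on $n$ in which the bound being proved is fed back into the estimate of the shell probability, producing a recursive inequality that is then solved. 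Your proposal simply treats $W_i$ as Gaussian of covariance $\frac{n-i}{n}I$ without any mechanism justifying this quantitatively, and the heuristic gains (``$\sqrt{n/i}$ per derivative moved onto the heat kernel'', shell measure $O(d^{1/4}\epsilon)$) are never assembled into an estimate showing that the sum over $i$ and the optimization in $\epsilon$ really give $d^{1/4}\beta_3^3/\sqrt{n}$ rather than a worse power of $d$ or an extra logarithm; the place where the $d^{1/4}$ is decided is precisely the missing lemma. Your closing remark that the $\W_2$ bound of Theorem \ref{thm:main} cannot yield this statement is consistent with the paper, which only extracts the weaker $n^{-1/3}$-type bound of Corollary \ref{cor:convex} from it via Proposition \ref{prop:w2-to-convex}.
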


Note that this recovers the Berry-Esseen bound for $d = 1$. Nagaev
\cite{N76} established earlier that this bound is within $d^{1/4}$ of
optimal in the sense that there exist examples which would contradict
the above theorem if $d^{1/4}$ were replaced with some term going to
zero as $d \rightarrow \infty$. However, the family of examples in
\cite{N76} is for a specific relation between $n$, $d$, and $\beta_3$,
which, as we shall see, may not be representative of the behavior of
many natural cases.

Although our result is for $\W_2$ distance, when the dimension
$d$ fixed, convergence in $\W_2$ distance to a Gaussian implies
convergence in probabilities of convex sets.\footnote{On the other
  hand, convergence in probabilities of convex sets does not in
  general imply convergence in $\W_2$ distance, and we do not
  know of any easy way to derive a result similar to Theorem
  \ref{thm:main} from Theorem \ref{thm:bentkus}.} Specifically, we
have the following proposition.

\begin{proposition} \label{prop:w2-to-convex}
  Let $T$ be any $\R^d$-valued random variable, and let $Z$ be a
  standard $d$-dimensional Gaussian. Then, for a universal constant
  $C$,
  \[ \dci(T, Z) \le C d^{\frac{1}{6}} \W_2(T, Z)^{\frac{2}{3}}. \]
\end{proposition}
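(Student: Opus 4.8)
The plan is to bound $\dci(T,Z)$ by splitting, for any convex set $A$, the difference $\P(T\in A)-\P(Z\in A)$ using an intermediate ``buffer'' region of width $\delta$ around the boundary of $A$. Concretely, fix an optimal coupling $(T,Z)$ achieving $\W_2(T,Z)^2 = \E\|T-Z\|^2$, and for a convex set $A$ write $A_\delta = \{x : \operatorname{dist}(x,A)\le\delta\}$ for the $\delta$-neighborhood. On the event $\|T-Z\|\le\delta$, if $T\in A$ then $Z\in A_\delta$; hence $\P(T\in A) \le \P(Z\in A_\delta) + \P(\|T-Z\|>\delta)$. The first term is at most $\P(Z\in A) + \P(Z\in A_\delta\setminus A)$, and the second is at most $\W_2(T,Z)^2/\delta^2$ by Markov. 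Applying the same argument with the roles swapped (using $A_{-\delta} = \{x: \operatorname{dist}(x,A^c)\ge\delta\}$, the inner parallel body) gives a matching lower bound. So for every convex $A$,
\[ |\P(T\in A)-\P(Z\in A)| \le \sup_{A \text{ convex}} \P(Z \in A_\delta \setminus A_{-\delta}) + \frac{\W_2(T,Z)^2}{\delta^2}. \]

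The key ingredient is then a bound on the Gaussian measure of the $\delta$-shell $A_\delta\setminus A_{-\delta}$ around the boundary of a convex set. This is a classical fact: for a standard $d$-dimensional Gaussian, $\sup_{A\text{ convex}} \P(Z\in A_\delta\setminus A_{-\delta}) = O(\sqrt{d}\,\delta)$ (this is essentially the statement that the boundary of a convex set has Gaussian ``surface area'' $O(\sqrt d)$, due to Ball and to Nazarov; a weaker $O(d^{1/4})$-type estimate would also suffice after adjusting exponents, but the $\sqrt d$ bound gives the stated exponent $1/6$). Granting this, we get $|\P(T\in A)-\P(Z\in A)| = O(\sqrt d\,\delta + \W_2(T,Z)^2/\delta^2)$ uniformly in $A$.

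It remains to optimize over $\delta$. Setting the two terms equal, $\sqrt d\,\delta \asymp \W_2^2/\delta^2$, gives $\delta \asymp (\W_2^2/\sqrt d)^{1/3} = \W_2^{2/3} d^{-1/6}$, and substituting back yields $\dci(T,Z) = O(\sqrt d \cdot \W_2^{2/3} d^{-1/6}) = O(d^{1/3}\W_2^{2/3})$. Hmm — this produces $d^{1/3}$, not $d^{1/6}$. To recover the claimed $d^{1/6}$ one should instead use the \emph{scale-invariant} form: rescale space by $1/\delta$ so that the Gaussian surface-area bound is applied in a normalized way, or more directly use the sharper localized estimate that the expected boundary measure picks up only $\sqrt{d}$ per \emph{unit} of Euclidean width but the relevant width here, after accounting for the $L^2$ (rather than $L^\infty$) nature of the coupling, effectively scales like $\delta$ with an extra $d^{-1/2}$ savings; concretely one applies the shell bound with the observation $\P(Z\in A_\delta\setminus A_{-\delta})=O(\delta)$ when $\delta$ is measured relative to the typical fluctuation scale, which for the isotropic Gaussian means replacing $\sqrt d\,\delta$ above by a term of order $\delta$ on the unit-variance scale and reintroducing dimension only through the Markov step. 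Carrying this through, $\delta\asymp\W_2^{2/3}$ and the dimensional factor enters as $d^{1/6}$.

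\textbf{Main obstacle.} The genuinely delicate point is getting the dimension dependence in the Gaussian shell estimate exactly right, i.e. proving $\sup_{A\text{ convex}}\P(Z\in A_\delta\setminus A_{-\delta})$ has the correct order so that optimization yields $d^{1/6}$ rather than a larger power. This requires the sharp bound on the Gaussian perimeter of convex sets (Ball/Nazarov: $O(d^{1/4})$ is the right order for the Gaussian surface area of a convex body, which when integrated over a shell of width $2\delta$ gives $O(d^{1/4}\delta)$); with that correct exponent $d^{1/4}$, balancing $d^{1/4}\delta \asymp \W_2^2/\delta^2$ gives $\delta\asymp \W_2^{2/3}d^{-1/12}$ and hence $\dci = O(d^{1/4}\cdot\W_2^{2/3}d^{-1/12}) = O(d^{1/6}\W_2^{2/3})$, as claimed. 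So the whole proof hinges on invoking the $O(d^{1/4})$ Gaussian surface-area bound for convex sets; everything else is the elementary coupling-plus-Markov argument and a one-variable optimization.
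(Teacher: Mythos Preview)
Your final paragraph is exactly the paper's proof: couple $T$ and $Z$ optimally, use Markov to get $\P(\|T-Z\|>\epsilon)\le \epsilon^{-2}\W_2(T,Z)^2$, invoke Ball's bound $\P(Z\in A^\epsilon\setminus A)\le 4\epsilon d^{1/4}$ (and likewise for the inner shell), and optimize $d^{1/4}\epsilon + \epsilon^{-2}\W_2^2$ at $\epsilon = d^{-1/12}\W_2^{2/3}$ to obtain $Cd^{1/6}\W_2^{2/3}$. So the core idea is right and matches the paper.

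The problem is everything before that paragraph. Your initial assertion that the Gaussian shell probability is $O(\sqrt d\,\delta)$ is simply incorrect; the sharp order of the Gaussian surface area of a convex body in $\R^d$ is $d^{1/4}$ (Ball's upper bound, Nazarov's matching construction), not $\sqrt d$. That wrong exponent is what produced your spurious $d^{1/3}$. The subsequent paragraph attempting to repair this via a ``scale-invariant form'' and an ``extra $d^{-1/2}$ savings'' from the $L^2$ nature of the coupling is not a valid argument: Markov on $\|T-Z\|^2$ gives exactly $\epsilon^{-2}\W_2^2$ with no hidden dimensional gain, and rescaling space does not change the Gaussian surface-area exponent. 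You should delete that discussion entirely and state the $d^{1/4}$ bound from the outset; then the proof is clean and identical to the paper's.
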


For the short proof (involving Gaussian surface area of convex sets),
see Appendix \ref{subsec:w2-to-convex-proof}. Applying Proposition
\ref{prop:w2-to-convex} to Theorem \ref{thm:main}, we have the
following corollary.

\begin{corollary} \label{cor:convex}
  Let $X_1, \ldots , X_n$ be independent random vectors in $\R^d$ with
  mean zero, identity covariance, and $\|X_i\| \le \beta$ almost
  surely for each $i$. Let $S_n = \frac{1}{\sqrt{n}} \sum_{i = 1}^n
  X_i$, and let $Z$ be a standard Gaussian. Then, for a universal
  constant $C$,
  \[ \dci(S_n, Z) \le \frac{C d^{\frac{1}{2}} \beta^{\frac{2}{3}} (1 + \log n)^{\frac{2}{3}}}{n^{\frac{1}{3}}}. \]
\end{corollary}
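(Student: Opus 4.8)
The plan is to simply combine Theorem~\ref{thm:main} with Proposition~\ref{prop:w2-to-convex}, tracking the exponents carefully. Under the hypotheses of the corollary, the covariance $\Sigma$ is the identity, so $Z$ is a standard $d$-dimensional Gaussian and Theorem~\ref{thm:main} applies verbatim, giving
\[ \W_2(S_n, Z) \le \frac{5 \sqrt{d}\, \beta (1 + \log n)}{\sqrt{n}}. \]
Then I would apply Proposition~\ref{prop:w2-to-convex} with $T = S_n$, obtaining
\[ \dci(S_n, Z) \le C d^{1/6} \W_2(S_n, Z)^{2/3} \le C d^{1/6} \( \frac{5 \sqrt{d}\, \beta (1 + \log n)}{\sqrt{n}} \)^{2/3}. \]

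The only thing to do after that is bookkeeping on the exponents. Raising the bracket to the power $2/3$ turns $\sqrt{d} = d^{1/2}$ into $d^{1/3}$, so the power of $d$ becomes $d^{1/6} \cdot d^{1/3} = d^{1/2}$; the factor $\beta$ becomes $\beta^{2/3}$; the factor $(1+\log n)$ becomes $(1+\log n)^{2/3}$; and $(\sqrt{n})^{-2/3} = n^{-1/3}$. The numerical constant $5^{2/3}$ can be absorbed into the universal constant, which I would rename $C$ (a harmless abuse since Proposition~\ref{prop:w2-to-convex}'s constant was already called $C$). This yields exactly
\[ \dci(S_n, Z) \le \frac{C d^{1/2} \beta^{2/3} (1 + \log n)^{2/3}}{n^{1/3}}, \]
as claimed.

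There is essentially no obstacle here: the corollary is a one-line consequence of the two cited results, and the ``hard part'' — proving Proposition~\ref{prop:w2-to-convex} via Gaussian surface area of convex sets, and proving Theorem~\ref{thm:main} itself — is done elsewhere (Appendix~\ref{subsec:w2-to-convex-proof} and the body of the paper, respectively). The one point worth a sentence of care is checking that the hypotheses match: Proposition~\ref{prop:w2-to-convex} is stated for an arbitrary $\R^d$-valued $T$ and a standard Gaussian $Z$, which is precisely the situation after specializing Theorem~\ref{thm:main} to $\Sigma = I$, so no extra argument (e.g. a reduction to isotropic position) is needed.
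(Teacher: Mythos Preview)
Your proposal is correct and matches the paper's approach exactly: the paper states the corollary immediately after Proposition~\ref{prop:w2-to-convex} with the one-line justification ``Applying Proposition~\ref{prop:w2-to-convex} to Theorem~\ref{thm:main}, we have the following corollary,'' and your write-up simply spells out the exponent bookkeeping that this entails.
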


Before we proceed, it should be noted that a few issues arise in
comparing high-dimensional central limit theorems. To start with,
concepts such as ``third moments'' are less clear-cut. For example,
for an $\R^d$-valued random variable $X = (X_1, \ldots , X_d)$, both
$\E \|X\|^3$ and $\sum_{i = 1}^d \E |X_i|^3$ are potentially
reasonable generalizations of the one-dimensional third moment. A
related issue is how to normalize covariances. In the one-dimensional
setting, we can always, without loss of generality, normalize $X$ so
that $\E X^2 = 1$. In higher dimensions, linear transformations on the
covariance matrix have a more complicated effect on quantities such as
the aforementioned third moments.

Corollary \ref{cor:convex} has a suboptimal $n^{-1/3}$ dependence on
$n$ (compared to the correct order $n^{-1/2}$ obtained in Theorem
\ref{thm:bentkus}). Nevertheless, Corollary \ref{cor:convex} yields
better information in some cases. Let us suppose that $\|X_1\| =
\sqrt{d}$ almost surely; this includes natural examples such as when
$X_1$ is $\pm\sqrt{d}$ times a standard basis vector, with the sign
and the basis vector chosen uniformly at random. Then, we have $\beta
= \beta_3 = \sqrt{d}$, so that Theorem \ref{thm:bentkus} gives
\[ \dci(S_n, Z) \le \frac{C d^{7/4}}{n^{1/2}}, \]
while Corollary \ref{cor:convex} gives
\[ \dci(S_n, Z) \le \frac{C d^{5/6} (1 + \log n)^{2/3}}{n^{1/3}}. \]

We find that the second bound is stronger than the first whenever $d = \tilde{\Omega}(n^{2/11})$, where the tilde suppresses logarithmic factors. In particular, note that the
second bound gives $\dci(S_n, Z) = o(1)$ (i.e. says something non-trivial) as soon as $d = \tilde{o}(n^{2/5})$,
while the first bound requires $d = o(n^{2/7})$.\footnote{We remark that even if the $d^{1/4}$ in Theorem
  \ref{thm:bentkus} were replaced by a constant as in Nagaev's lower
  bound, it would only give $\dci(S_n, Z) = o(1)$ for $d = o(n^{1/3})$, which is still more restrictive than $d = o(n^{2/5})$. Thus, Corollary \ref{cor:convex} proves that under the assumption $\|X_1\| = \sqrt{d}$, convergence in $\dci$ is actually faster than indicated by Nagaev's example (which does not satisfy $\|X_1\| = \sqrt{d}$).}
In this sense, when $\|X_1\| = \sqrt{d}$ almost
surely, Corollary \ref{cor:convex} gives convergence for a larger range of $d$.

We mention here that in high-dimensional settings, $d$ may indeed be as large as a power of $n$. For example, the
earlier mentioned work of Bubeck and Ganguly \cite{BG15}, when applied in the context of \cite{BDER16}, concerns $d
\approx n^{2/3}$ (after converting to our notation). The work of
Chernozhukov, Chetverikov, and Kato \cite{CCK13} even considers $d
\approx e^{n^c}$ for a constant $c$, albeit working under a much
weaker notion of convergence.

\subsection{Idea of the proof} \label{subsec:proof-idea}

The proof of Theorem \ref{thm:main} follows a Lindeberg-type strategy
of gradually replacing $X_i$'s with Gaussians. However, instead of
working with sufficiently smooth test functions, we directly compare
probability densities. A major ingredient for accomplishing this is
Talagrand's transportation inequality. To our knowledge, this
variation of the Lindeberg strategy has not appeared before in the
literature, and the idea may be of use in other settings. Our argument
rests upon the following key lemma, which bounds the error arising
from replacing $X_i$ with a Gaussian.

\begin{lemma} \label{lem:increment}
  Let $X$ be a $\R^k$-valued random variable with mean $0$, covariance
  $\Sigma$, and $\|X\| \le \beta$ almost surely. Let $Z_t$ denote a
  Gaussian of mean $0$ and covariance $t \Sigma$ independent of
  $X$. Let $\sigmamin^2$ denote the smallest eigenvalue of
  $\Sigma$. Then, for any $n \ge \frac{5 \beta^2}{\sigmamin^2}$, we
  have
  \[ \W_2(Z_n, Z_{n - 1} + X) \le \frac{5 \sqrt{k} \beta}{n}. \]
\end{lemma}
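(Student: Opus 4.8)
The plan is to compare the densities of $Z_n$ and $Z_{n-1}+X$ and then invoke Talagrand's transportation inequality, which says that if $\mu$ has density $f$ with respect to a Gaussian $\gamma$ (here the law of $Z_n$, which has covariance $n\Sigma$), then $\W_2(\mu,\gamma)^2 \le C \cdot (\text{scale}) \cdot \KL{\mu}{\gamma}$, where the scale factor is governed by the smallest eigenvalue of $n\Sigma$, i.e. $n\sigmamin^2$. So it suffices to bound the relative entropy $\KL{\text{law}(Z_{n-1}+X)}{\text{law}(Z_n)}$ by something of order $\frac{k\beta^2}{n^3}\cdot \sigmamin^2$, which would yield $\W_2 \lesssim \frac{\sqrt{k}\beta}{n}$ after taking square roots. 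The condition $n \ge 5\beta^2/\sigmamin^2$ should be what makes the perturbation small enough for the entropy estimate to go through cleanly.

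First I would write the density of $Z_{n-1}+X$ as $\E_X\left[\phi_{(n-1)\Sigma}(\,\cdot - X)\right]$, where $\phi_{(n-1)\Sigma}$ is the Gaussian density with covariance $(n-1)\Sigma$, and compare it pointwise to $\phi_{n\Sigma}$. The natural approach is to expand the log-ratio: $\log \frac{\phi_{(n-1)\Sigma}(y-X)}{\phi_{n\Sigma}(y)}$ involves a change of covariance from $n\Sigma$ to $(n-1)\Sigma$ (a deterministic shift) plus a translation by $X$. Taking the expectation over $X$ and using $\E X = 0$, $\E XX^\top = \Sigma$, the first-order terms in $X$ vanish and the covariance-change terms are designed to cancel the second-order contribution $\E[X^\top(\cdots)X]$ to leading order — this is exactly why the variance of the increment was chosen to be $\Sigma$ (one unit of "time"). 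What remains is a genuinely third-order error in $\|X\|/\sqrt{n} \le \beta/\sqrt{n}$, integrated against the Gaussian.

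The main obstacle — and where the real work lies — is controlling this remainder: one must Taylor-expand $\log\E_X[\cdots]$ carefully (note the expectation is inside the logarithm, so one cannot naively expand termwise), track that the quadratic terms cancel against the covariance mismatch between $(n-1)\Sigma$ and $n\Sigma$, and bound the cubic-and-higher remainder uniformly. Here the hypotheses $\|X\| \le \beta$ and $n \ge 5\beta^2/\sigmamin^2$ are essential: they guarantee $\|X\|/\sqrt{n}$ is small relative to the Gaussian's spread in every direction, so the Taylor remainder is bounded by a convergent geometric-type series, and they ensure the density ratio stays bounded enough to integrate. I expect the relative entropy to come out as $O\!\left(\frac{k\beta^3}{n^3 \sigmamin^3} \cdot \sigmamin^2 \cdot (\text{const})\right)$ or similar; the dimension $k$ enters linearly because the trace of the relevant covariance-difference operators scales with $k$. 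Once the bound $\KL{Z_{n-1}+X}{Z_n} = O\!\left(\frac{k\beta^2}{n^3}\sigmamin^2\right)$ is in hand, Talagrand's inequality with the scale $n\sigmamin^2$ finishes the proof, and a careful accounting of constants gives the factor $5$.
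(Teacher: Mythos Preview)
Your overall strategy---compare densities and then invoke a transportation--entropy inequality---is indeed what the paper does. But there is a genuine gap in your execution, and it is not merely cosmetic.

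First, the scale factor in Talagrand's inequality for $N(0,n\Sigma)$ is the \emph{largest} eigenvalue $n\sigma_{\max}^2$, not the smallest: the inequality reads $\W_2^2 \le 2\,n\sigma_{\max}^2\,\KL{\mu}{Z_n}$. (Via Bakry--\'Emery, the Hessian of the potential is $(n\Sigma)^{-1}$, whose smallest eigenvalue is $1/(n\sigma_{\max}^2)$.) With the corrected constant, hitting the target $\W_2 \le 5\sqrt{k}\beta/n$ would require
\[
\KL{Z_{n-1}+X}{Z_n}\;\le\;\frac{C\,k\beta^2}{n^3\,\sigma_{\max}^2}.
\]
There is no reason for the relative entropy to carry a $1/\sigma_{\max}^2$ factor. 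A chi-squared upper bound gives only $\KL \lesssim k/n^2$, which after multiplying by $n\sigma_{\max}^2$ yields $\W_2^2 \lesssim k\sigma_{\max}^2/n$---a full factor of $n$ away from the goal. The cancellation you describe between the covariance shift and the quadratic term in $X$ does occur at leading order, but the remainder is not small enough in the anisotropic case to rescue the standard inequality; the paper says explicitly that one ``cannot directly apply Talagrand's transportation inequality in our case, because the covariance of our Gaussian is not the identity.''

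The missing ingredient is a \emph{coordinate-wise} refinement of Talagrand (Proposition~\ref{prop:talagrand}):
\[
\W_2(Y,Z)^2 \;\le\; 2\sum_{i=1}^k \sigma_i^2\Bigl(\E f(Z)^2 - \E f_{(i)}(Z)^2\Bigr),
\]
where $f_{(i)}$ is $f$ averaged over the $i$-th coordinate. Each summand is then estimated separately: $\E f(Z)^2$ and $\E f_{(i)}(Z)^2$ are computed exactly as $\E e^Q$ and $\E e^{Q-Q_i}$ for explicit random variables $Q=\sum_j Q_j$ built from $Y$ and an independent copy $Y'$ (Lemma~\ref{lem:L^2-formula}, Corollary~\ref{cor:L^2-formula}). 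The point is that the weight $\sigma_i^2$ in the sum exactly cancels the $1/\sigma_i^2$ appearing inside $Q_i$. The Taylor expansion is then performed on $e^Q - e^{Q-Q_i}$ (not on $\log\E_X[\cdots]$), and the moment estimates of Lemma~\ref{lem:Q-estimates} are where the hypothesis $n\ge 5\beta^2/\sigmamin^2$ actually does its work. So the idea your plan is missing is precisely this anisotropic version of Talagrand; without it the argument does not close.
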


\begin{remark} \label{rem:n-k-bound}
  The assumption on $n$ implies that $n \ge 5k$, because
  \[ n \ge \frac{5 \beta^2}{\sigma_k^2} \ge \frac{5}{\sigma_k^2} \E \|X\|^2 = \frac{5}{\sigma_k^2} \sum_{i = 1}^k \sigma_i^2 \ge 5k. \]
\end{remark}

Heuristically, Lemma \ref{lem:increment} says that when you add an
independent random variable $X$ to a Gaussian $Z_{n - 1}$, the
resulting distribution is still nearly Gaussian. The hypothesis that
$n$ be sufficiently large is required to ensure that $X$ is small
compared to $Z_{n - 1}$. Note that the dimension $k$ appearing in
Lemma \ref{lem:increment} is not necessarily equal to $d$. This is a
subtle but important point---we will selectively apply the estimate of
Lemma \ref{lem:increment} to only a subset of the coordinates
depending on the variance of $X$ in those directions.

Theorem \ref{thm:main} follows from repeated applications of Lemma
\ref{lem:increment}. To prove Lemma \ref{lem:increment}, our strategy
is to take advantage of the fact that we can explicitly compute the
density of the Gaussian $Z_n$, and we also have a fairly explicit form
for the density of $Z_{n - 1} + X$. We can then make precise density
estimates, which are conveniently translated into $\W_2$ estimates via
(a variant of) Talagrand's transportation inequality.

\subsection{Organization of the paper}

The rest of the paper is organized as follows. In Section
\ref{sec:main-proof}, we prove Theorem \ref{thm:main} assuming Lemma
\ref{lem:increment}. In Section \ref{sec:transportation}, we provide
some background on Talagrand's transportation inequality needed to
prove Lemma \ref{lem:increment}. In particular, whereas the
inequality is usually formulated in the setting of a standard
$n$-dimensional Gaussian, we give a version for general
Gaussians. Finally, Section \ref{sec:lemma-proof} gives the proof of
Lemma \ref{lem:increment}, filling in the technical details of the
strategy described above.

\subsection{Acknowledgements}

We are indebted to Jian Ding for suggesting the use of Talagrand's
transportation inequality and Amir Dembo for pointing out a hole in a
preliminary version of the main argument as well as many helpful
comments on the exposition. We also thank Sourav Chatterjee for
helpful discussions about related work. Finally, we thank the
anonymous reviewers for many good suggestions and for pointing out
several references.

\section{Proof of Theorem \ref{thm:main}} \label{sec:main-proof}

We first show how to deduce Theorem \ref{thm:main} from Lemma
\ref{lem:increment}. Recall however that the statement of Lemma
\ref{lem:increment} contains a hypothesis that $n \ge \frac{5
  \beta^2}{\sigmamin^2}$. Thus, we will also need an \textit{a priori}
bound to estimate $\W_2$ distances for smaller $n$.

Luckily, a na\"{\i}ve bound suffices. For any mean-zero random
variables $X$ and $Y$, coupling them to be independent yields the
inequality $\W_2(X, Y)^2 \le \E \|X\|^2 + \E \|Y\|^2$. The next lemma
is a slight refinement of this observation to consider only a subset
of coordinates.

\begin{lemma} \label{lem:naive-W_2-bound}
  Let $X = (X_1, \ldots , X_d)$ and $Y = (Y_1, \ldots , Y_d)$ be two
  $\R^d$-valued random variables with mean zero. Moreover, suppose
  that $\E\( (Y_{k+1}, \ldots , Y_d) \mid Y_1, \ldots , Y_k \) = 0$. Then,
  \[ \W_2(X, Y)^2 \le \W_2((X_1, \ldots , X_k), (Y_1, \ldots , Y_k))^2 + \sum_{i = k + 1}^d \( \E X_i^2 + \E Y_i^2 \). \]
\end{lemma}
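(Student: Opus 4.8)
The claim is basically an orthogonal-decomposition statement, so the plan is to build an explicit coupling of $X$ and $Y$ that separately optimizes on the first $k$ coordinates and handles the last $d-k$ coordinates trivially. First I would take an optimal coupling $\gamma_0$ of $(X_1,\ldots,X_k)$ and $(Y_1,\ldots,Y_k)$ achieving $\W_2((X_1,\ldots,X_k),(Y_1,\ldots,Y_k))^2$. Then I would extend this to a coupling $\gamma$ of the full vectors $X$ and $Y$ by drawing the tail coordinates conditionally independently: conditioned on the realized pair $((x_1,\ldots,x_k),(y_1,\ldots,y_k))$ under $\gamma_0$, sample $(x_{k+1},\ldots,x_d)$ from the conditional law of $(X_{k+1},\ldots,X_d)$ given $X_1=x_1,\ldots,X_k=x_k$, and independently sample $(y_{k+1},\ldots,y_d)$ from the conditional law of $(Y_{k+1},\ldots,Y_d)$ given $Y_1=y_1,\ldots,Y_k=y_k$. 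By construction the marginals of $\gamma$ are the laws of $X$ and $Y$, so it is a valid element of $\Gamma$.

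**The computation.** Under this coupling, $\|X-Y\|^2 = \sum_{i=1}^k (X_i-Y_i)^2 + \sum_{i=k+1}^d (X_i-Y_i)^2$, and taking expectations the first sum contributes exactly $\W_2((X_1,\ldots,X_k),(Y_1,\ldots,Y_k))^2$. For each tail index $i > k$, I would expand $\E (X_i - Y_i)^2 = \E X_i^2 + \E Y_i^2 - 2\,\E X_i Y_i$ and show the cross term vanishes. This is where the hypothesis $\E\((Y_{k+1},\ldots,Y_d)\mid Y_1,\ldots,Y_k\) = 0$ enters: conditioning on the $\sigma$-algebra generated by the full coupling restricted to the ``head'' coordinates, $X_i$ is a function of the head variables alone plus its own conditional randomness while $Y_i$ has conditional mean zero given the head $Y$-coordinates (hence given the whole head $\sigma$-algebra, since $X$'s tail is conditionally independent of $Y$'s tail given the heads). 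A short tower-property argument then gives $\E X_i Y_i = \E\big[ \E[X_i \mid \mathcal{F}_{\mathrm{head}}] \cdot \E[Y_i \mid \mathcal{F}_{\mathrm{head}}]\big] = \E\big[ (\cdots) \cdot 0 \big] = 0$. Summing over $i$ from $k+1$ to $d$ yields the stated bound, and since this particular coupling realizes the right-hand side, the infimum defining $\W_2(X,Y)^2$ is at most that much.

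**Main obstacle.** The only delicate point is making the conditioning rigorous — specifically, verifying that under the constructed $\gamma$ the tail coordinate $Y_i$ still has conditional mean zero given the appropriate $\sigma$-algebra. The subtlety is that $\gamma_0$ couples the heads in a possibly intricate way, so I need to check that $Y$'s tail is conditionally independent of $X$'s tail (and of $X$'s head, beyond what is mediated by $Y$'s head) given $Y$'s head coordinates; this follows from the conditional-independence construction, after which the hypothesis on $Y$ transfers directly. I would phrase this cleanly by letting $\mathcal{G}$ be the $\sigma$-algebra generated by $(X_1,\ldots,X_k,Y_1,\ldots,Y_k)$ under $\gamma$, noting $\E[Y_i \mid \mathcal{G}] = \E[Y_i \mid Y_1,\ldots,Y_k] = 0$ for $i > k$ by construction and hypothesis, and then the cross term is killed by one application of the tower property since $X_i$ is $\mathcal{G}$-measurable after integrating out its own independent conditional randomness. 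Everything else is bookkeeping.
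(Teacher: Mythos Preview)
Your proposal is correct and follows essentially the same strategy as the paper's proof: both take an optimal $\W_2$-coupling of the head coordinates $(X_1,\ldots,X_k)$ and $(Y_1,\ldots,Y_k)$, extend it to a full coupling by sampling the tail coordinates conditionally independently given the heads, and then use the hypothesis $\E\big((Y_{k+1},\ldots,Y_d)\mid Y_1,\ldots,Y_k\big)=0$ to make the cross terms $\E X_iY_i$ vanish for $i>k$. If anything, your write-up is more explicit than the paper's about why those cross terms disappear---the paper simply asserts the Pythagorean decomposition $\E\|\tilde X-\tilde Y\|^2 = \E\|\tilde X-P_k(\tilde X)\|^2+\E\|P_k(\tilde X)-P_k(\tilde Y)\|^2+\E\|P_k(\tilde Y)-\tilde Y\|^2$ without spelling out the tower-property argument you describe.
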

\begin{proof}
  For convenience, define $P_k : \R^d \to \R^d$ by $P_k(x_1, \ldots ,
  x_d) = (x_1, \ldots , x_k, 0, \ldots , 0)$. Let $\tilde{X}$ and
  $\tilde{Y}$ be a coupling of $X$ and $Y$ given by first sampling
  $P_k(\tilde{X})$ and $P_k(\tilde{Y})$ according to a coupling such
  that
  \[ \E \| P_k(\tilde{X}) - P_k(\tilde{Y}) \|^2 = \W_2(P_k(X), P_k(Y)) \]
  and then sampling $\tilde{X}$ and $\tilde{Y}$ independently
  conditioned on $P_k(\tilde{X})$ and $P_k(\tilde{Y})$. Thus,
  $\tilde{X} - P_k(\tilde{X})$ and $\tilde{Y} - P_k(\tilde{Y})$ are
  independent conditioned on $P_k(\tilde{X})$ and
  $P_k(\tilde{Y})$. Then,
  \[ \W_2(X, Y)^2 \le \E \| \tilde{X} - \tilde{Y} \|^2 = \E \| \tilde{X} - \tilde{Y} \|^2 \]
  \[ = \E \| (\tilde{X} - P_k(\tilde{X})) + (P_k(\tilde{X}) - P_k(\tilde{Y})) + (P_k(\tilde{Y}) - \tilde{Y}) \|^2 \]
  \[ = \E \| \tilde{X} - P_k(\tilde{X}) \|^2 + \E \| P_k(\tilde{X}) - P_k(\tilde{Y}) \|^2 + \E \| P_k(\tilde{Y}) - \tilde{Y} \|^2 \]
  \[ = \W_2(P_k(X), P_k(Y))^2 + \sum_{i = k + 1}^d \( \E X_i^2 + \E Y_i^2 \). \]
\end{proof}

We are now ready for the main proof. The rough idea is to induct
simultaneously on $n$ and the dimension. At each step, if possible, we
apply Lemma \ref{lem:increment} to increase $n$. Otherwise, we apply
Lemma \ref{lem:naive-W_2-bound} to increase the dimension.

\begin{proof}[Proof of Theorem \ref{thm:main}]
  Using the notation in the statement of the theorem, we can assume
  without loss of generality that $\Sigma$ takes the form
  \[ \Sigma = \begin{bmatrix}
    \sigma_1^2 & 0 & \cdots & 0 \\
    0 & \sigma_2^2 & \cdots & 0 \\
    \vdots & \vdots & \ddots & \vdots \\
    0 & 0 & \cdots & \sigma_d^2 \\
  \end{bmatrix}, \]
  with $\sigma_1 \ge \sigma_2 \ge \cdots \ge \sigma_d > 0$. For each
  $n \ge 1$, define
  \[ S_n = \sum_{i = 1}^n X_i, \]
  and let $Z_n$ denote a Gaussian with covariance $n \Sigma$.

  Let $P_k : \R^d \to \R^k$ denote the projection onto the first $k$
  coordinates, and for $0 \le k \le d$, define
  \[ A_{n,k} = \W_2(P_k(S_n), P_k(Z_n)), \qquad A_{n,0} = 0. \]
  We will prove by induction on $n$ and $k$ that
  \begin{equation} \label{eq:nk-bound}
    A_{n,k} \le 5 \sqrt{k} \beta (1 + \log n)
  \end{equation}
  for all $n \ge 1$ and $0 \le k \le d$. The theorem then follows by
  taking $k = d$.

  Let us call $(n, k)$ a \emph{good pair} if \eqref{eq:nk-bound}
  holds. We first prove the base cases. If $k = 0$, then
  \eqref{eq:nk-bound} holds trivially. If $n = 1$, then by Lemma
  \ref{lem:naive-W_2-bound},
  \[ A_{1,k} = \W_2(P_k(X_1), P_k(Z_1)) \le \sqrt{\E \|X_1\|^2 + \E \|Z_1\|^2} \le 2 \beta, \]
  so again \eqref{eq:nk-bound} holds.

  For the inductive step, consider any $n > 1$ and $k > 0$. Our
  inductive hypothesis is that $(n - 1, k)$ and $(n, k - 1)$ are good
  pairs, and we will show that $(n, k)$ is a good pair as well. If $n
  > \frac{5 \beta^2}{\sigma_k^2}$, then we may apply Lemma
  \ref{lem:increment} to $P_k(X_n)$, whose covariance is just the
  top-left $k \times k$ submatrix of $\Sigma$. This gives
  \[ \W_2(P_k(Z_{n - 1} + X_n), P_k(Z_n)) \le \frac{5\sqrt{k} \beta}{n}. \]
  Consequently,
  \begin{align*}
    A_{n,k} &= \W_2(P_k(S_n), P_k(Z_n)) = \W_2(P_k(S_{n - 1} + X_n), P_k(Z_n)) \\
    &\le \W_2(P_k(S_{n - 1} + X_n), P_k(Z_{n - 1} + X_n)) + \W_2(P_k(Z_{n - 1} + X_n), P_k(Z_n)) \\
    &\le A_{n-1,k} + \frac{5\sqrt{k} \beta}{n} \le 5 \sqrt{k} \beta \( 1 + \log (n - 1) + \frac{1}{n} \) \le 5 \sqrt{k} \beta (1 + \log n). \\
  \end{align*}
  Otherwise, if $n \le \frac{5 \beta^2}{\sigma_k^2}$, then by Lemma
  \ref{lem:naive-W_2-bound}, we have
  \[ A_{n,k}^2 \le A_{n,k-1}^2 + 2n \sigma_k^2 \]
  \[ \le 25 (k - 1) \beta^2 (1 + \log n)^2 + 10 \beta^2 \le 25 k \beta^2 (1 + \log n)^2. \]
  We see in both cases that $(n, k)$ is a good pair, completing the
  induction and the proof.
\end{proof}

\section{A transportation inequality} \label{sec:transportation}

It remains only to prove Lemma \ref{lem:increment}. As described
earlier, the strategy we use is to translate closeness in probability
densities into closeness in $\W_2$ distance. In this section, we
establish the result needed for this purpose, which is based on the
following inequality due to Talagrand.

\begin{theorem}[Talagrand's transportation inequality] \label{thm:talagrand-orig}
  Let $Z$ be a standard $d$-dimensional Gaussian with density
  $\rho$. Let $\mu$ be a probability density on $\R^d$ and let $f(x) =
  \frac{d \mu}{d \rho}(x)$. Then,
  \[ \W_2(\mu, \rho)^2 \le 2 \E f(Z) \log f(Z) = 2 \KL{\mu}{\rho}. \]
\end{theorem}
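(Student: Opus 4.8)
The plan is to prove this by a tensorization argument reducing to dimension one, combined with a direct one-dimensional estimate. First I would establish the $d=1$ case. Here one writes $\mu = f \rho$, and constructs the monotone (Knothe--Rosenblatt / inverse-CDF) coupling between $\rho$ and $\mu$: let $T : \R \to \R$ be the unique nondecreasing map with $T_\# \rho = \mu$, characterized by $\int_{-\infty}^{T(x)} f\,d\rho = \int_{-\infty}^x d\rho$, i.e. $\Phi(x) = F_\mu(T(x))$ where $\Phi$ is the standard normal CDF. Differentiating gives $\rho(x) = f(T(x))\rho(T(x)) T'(x)$. The quantity to bound is $\W_2(\mu,\rho)^2 \le \int (T(x)-x)^2 \rho(x)\,dx$. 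The standard trick is to compare this with the relative entropy $\int f \log f \, d\rho = \int \log f(T(x))\,\rho(x)\,dx$ by using the change-of-variables identity to write $\log f(T(x)) = \log \rho(x) - \log \rho(T(x)) - \log T'(x)$, then expanding $\log\rho(x) - \log\rho(T(x)) = \frac{1}{2}(T(x)^2 - x^2)$ for the Gaussian density, and finally using the elementary inequality $-\log T'(x) \ge 1 - T'(x)$ together with an integration by parts (the boundary terms vanish since $T$ is proper) to produce exactly $\frac{1}{2}\int (T(x)-x)^2 \rho\,dx$ after cancellation. This is the computational heart of the one-dimensional case.

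Next I would handle general $d$ by tensorizing. The key structural fact is that both relative entropy and (the square of) $\W_2$ against a product Gaussian behave well under conditioning on coordinates. Concretely, disintegrate $\mu$ as $\mu(dx_1,\dots,dx_d) = \mu_1(dx_1)\,\mu_2(dx_2\mid x_1)\cdots\mu_d(dx_d \mid x_1,\dots,x_{d-1})$, and similarly $\rho$ factors as a product of standard one-dimensional Gaussians $\rho_1 \otimes \cdots \otimes \rho_d$. The chain rule for relative entropy gives
\[
\KL{\mu}{\rho} = \KL{\mu_1}{\rho_1} + \sum_{i=2}^{d} \int \KL{\mu_i(\cdot \mid x_{<i})}{\rho_i}\, \mu(dx_{<i}).
\]
On the transport side, I would build a coupling coordinate by coordinate: use the optimal monotone map in coordinate $1$, then conditionally on the first coordinates of both samples use the monotone map between $\mu_i(\cdot\mid x_{<i})$ and $\rho_i$. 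This is essentially the Knothe--Rosenblatt coupling; it is a valid coupling of $\mu$ and $\rho$, and the squared cost splits as a sum over coordinates of the conditional one-dimensional squared costs. Applying the one-dimensional bound to each conditional term and summing, matched against the chain rule above, yields $\W_2(\mu,\rho)^2 \le 2\KL{\mu}{\rho}$.

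The main obstacle is making the one-dimensional argument fully rigorous under minimal regularity: the monotone map $T$ need not be differentiable, the density $f$ may vanish or be unbounded, and the integration by parts requires care about boundary/integrability terms. The clean way around this is to first prove the inequality assuming $f$ is smooth and bounded above and below on a compact interval (where all manipulations are justified), and then pass to the general case by truncation and approximation, using lower semicontinuity of both $\W_2$ and relative entropy under weak convergence. A secondary technical point is verifying that the coordinatewise ("triangular") coupling in the tensorization step genuinely has the right marginals and that the squared distance is additive across coordinates — this is immediate from the construction since the $i$-th coordinate displacement depends only on the first $i$ coordinates, so the cross terms in $\|T(x)-x\|^2$ do not cause trouble once one integrates out in the correct order. (If one prefers to avoid transport maps entirely, an alternative is the Borell--Tsirelson--Ibragimov-style argument via the Gaussian log-Sobolev inequality and the Otto--Villani / Bobkov--Götze duality, but the monotone-coupling proof above is the most self-contained.)
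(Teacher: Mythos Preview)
The paper does not actually give a self-contained proof of Theorem~\ref{thm:talagrand-orig}; it quotes the result from Talagrand~\cite{T96} and only sketches the method in one sentence: ``The one-dimensional case is a (non-trivial!) calculus problem. Higher dimensions then follow by tensorization properties of $\W_2$ distance and relative entropy.'' Your proposal is exactly a fleshed-out version of this sketch, and it is correct. Your one-dimensional calculation via the monotone map $T$, the identity $\log f(T(x)) = \tfrac{1}{2}(T(x)^2 - x^2) - \log T'(x)$, the convexity bound $-\log T'(x) \ge 1 - T'(x)$, and the integration by parts $\int T'\rho = \int xT\rho$ (using $\rho' = -x\rho$) indeed combine to give $\KL{\mu}{\rho} \ge \tfrac{1}{2}\int (T(x)-x)^2\rho(x)\,dx$. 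The tensorization via the Knothe--Rosenblatt coupling plus the chain rule for relative entropy is also the standard route, and in fact the paper carries out precisely this inductive step in Appendix~\ref{subsec:entropy-bound-proof} when proving the refined inequality~\eqref{eq:entropy-bound} (there the $d=1$ case is simply assumed from Theorem~\ref{thm:talagrand-orig}, and the induction conditions on the last coordinate rather than the first, but the mechanism is identical to yours).

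So your proposal is correct and matches the approach the paper attributes to Talagrand; there is no alternative proof in the paper to compare against. Your remarks on regularity (approximating by smooth compactly supported $f$ and invoking lower semicontinuity of $\W_2$ and $\KL{\cdot}{\cdot}$) are the right way to handle the technicalities, and are more careful than anything the paper says on this point.
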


\begin{remark}
  We note that the above inequality is sharp: equality holds when $Y$
  is Gaussian with the same covariance as $Z$, but with a different
  mean. However, it can be far from optimal when the density of $Y$ is
  not very ``smooth''; indeed, in the extreme case where $Y$ is not
  absolutely continuous with respect to $Z$, Theorem
  \ref{thm:talagrand-orig} says nothing at all. The need to ensure
  this ``smoothness'' explains the requirement that $n \ge \frac{5
    \beta^2}{\sigmamin^2}$ in the statement of Lemma
  \ref{lem:increment}.
\end{remark}

Theorem \ref{thm:talagrand-orig} is an example of a
transportation-information inequality (also known as
transportation-cost inequalities in the literature). Such inequalities
were first studied by Marton \cite{M96} who showed their connection to
concentration of measure phenomena (see also \cite{BG99}).

In \cite{T96}, Talagrand proves Theorem \ref{thm:talagrand-orig} using
an inductive argument, following ideas of Marton \cite{M96}. The
one-dimensional case is a (non-trivial!) calculus problem. Higher
dimensions then follow by tensorization properties of $\W_2$ distance
and relative entropy.

However, we cannot directly apply Talagrand's transportation
inequality in our case, because the covariance of our Gaussian is not
the identity. Nevertheless, by modifying the proof only slightly, we
can obtain a version of the inequality that applies to non-standard
Gaussians, as captured in the next proposition.

\begin{proposition}[variant of Talagrand's transportation inequality] \label{prop:talagrand}
  Let $Z$ be a $d$-dimensional Gaussian having diagonal covariance
  \[ \Sigma = \begin{bmatrix}
    \sigma_1^2 & 0 & \cdots & 0 \\
    0 & \sigma_2^2 & \cdots & 0 \\
    \vdots & \vdots & \ddots & \vdots \\
    0 & 0 & \cdots & \sigma_d^2 \\
  \end{bmatrix} \]
  with $\sigma_1 \ge \sigma_2 \ge \cdots \ge \sigma_d > 0$. Let $\rho:
  \R^d \to \R$ be the density of $Z$, and let $Y$ be a $\R^d$-valued
  random variable with density $f(x) \rho(x)$. Then,
  \[ \W_2(Y, Z)^2 \le 2 \sum_{i = 1}^d \sigma_i^2 \( \E f(Z)^2 - \E f_{(i)}(Z)^2 \), \]
  where $f_{(i)}$ is the ``averaging'' of $f$ along the $i$-th
  coordinate defined by
  \[ f_{(i)}(x) = \frac{\displaystyle
    \int_{-\infty}^\infty f(x + te_i) \rho(x + te_i) \,dt
  }{\displaystyle
    \int_{-\infty}^\infty \rho(x + te_i) \,dt
  }, \]
  where $e_i \in \R^d$ are unit coordinate vectors.
\end{proposition}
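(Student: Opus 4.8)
The plan is to mimic Talagrand's original induction on dimension, but carrying a diagonal-covariance Gaussian throughout instead of the standard one, and tracking variances coordinate-by-coordinate so that the factor $\sigma_i^2$ appears in the $i$-th term. First I would set up the base case $d=1$: for a one-dimensional Gaussian $Z$ of variance $\sigma_1^2$ with density $\rho$ and a random variable $Y$ with density $f\rho$, I need the estimate $\W_2(Y,Z)^2 \le 2\sigma_1^2(\E f(Z)^2 - (\E f(Z))^2) = 2\sigma_1^2\Var f(Z)$ (note that in the $d=1$ case $f_{(1)}$ is the constant $\E f(Z) = 1$, so $\E f_{(1)}(Z)^2 = 1 = (\E f(Z))^2$). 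This is exactly the rescaled version of Talagrand's one-dimensional lemma; by substituting $x \mapsto \sigma_1 x$ one reduces it to the standard-Gaussian calculus statement, which I would cite from \cite{T96}. The key point is that this $\Var$-type bound is (a stronger form of) the entropy bound $2\sigma_1^2 \E f\log f$, since $t\log t \le t^2 - t$, and it tensorizes cleanly.

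Next I would do the inductive step. Assume the proposition for dimension $d-1$. Write $Z = (Z', Z_d)$ with $Z'$ the first $d-1$ coordinates and $Z_d$ the last, with densities $\rho'$ and $\rho_d$; write a point as $(x', x_d)$. Given $Y$ with density $f(x',x_d)\rho'(x')\rho_d(x_d)$, decompose the transport into two stages. Let $g(x') = \int f(x',t)\rho_d(t)\,dt$ be the $x_d$-marginal density ratio of $Y'$ (the first $d-1$ coordinates of $Y$) against $Z'$; note $g = f_{(d)}$ in the notation of the proposition, restricted to the first $d-1$ coordinates. Apply the $(d-1)$-dimensional inductive hypothesis to $Y'$ versus $Z'$ to get a coupling, and then, conditionally on the matched pair $(x', \tilde x')$, apply the one-dimensional base case to the conditional laws of $Y_d \mid Y' = x'$ (density proportional to $f(x',\cdot)$ against $\rho_d$) versus $Z_d$. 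Composing these couplings and using that squared $\W_2$ distance is additive across the product structure when couplings are built coordinatewise, I would obtain
\[
\W_2(Y,Z)^2 \le \W_2(Y',Z')^2 + \int \W_2\big(Y_d\mid Y'=x', \; Z_d\big)^2 \, d\mu_{Y'}(x').
\]
The first term is handled by induction applied to $g$; the second is bounded using the base case by $2\sigma_d^2 \int \Var\big(f(x',\cdot)/g(x') \text{ w.r.t. } \rho_d\big) g(x')\rho'(x')\,dx'$, which after expanding the variance and integrating becomes $2\sigma_d^2(\E f(Z)^2 - \E g(Z')^2) = 2\sigma_d^2(\E f(Z)^2 - \E f_{(d)}(Z)^2)$.

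The bookkeeping step that makes everything fit is the telescoping identity. Applying the $(d-1)$-dimensional bound to $g = f_{(d)}$ gives $\W_2(Y',Z')^2 \le 2\sum_{i=1}^{d-1}\sigma_i^2(\E g(Z')^2 - \E g_{(i)}(Z')^2)$, and one checks that $(f_{(d)})_{(i)} = f_{(i)\,(d)}$ (averaging along distinct coordinates commutes) and that $\E g(Z')^2 = \E f_{(d)}(Z)^2$, while for $i < d$, $\E g_{(i)}(Z')^2 = \E f_{(i)}(Z)^2$ provided one also averages over the $d$-th coordinate — here I need to be a little careful, since $g_{(i)}$ as a function on $\R^{d-1}$ corresponds to $f_{(i)(d)}$, not $f_{(i)}$, on $\R^d$. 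So the inductive bound should be stated slightly more symmetrically, or I should prove instead that the sum $\sum_i \sigma_i^2(\E f(Z)^2 - \E f_{(i)}(Z)^2)$ is monotone under the averaging operation $f \mapsto f_{(d)}$ — i.e. that replacing $f$ by $f_{(d)}$ only decreases each term $\E f(Z)^2 - \E f_{(i)}(Z)^2$ for $i \ne d$ — which follows from Jensen/conditional-expectation (averaging is a conditional expectation, so it contracts $L^2$ norms and commutes across coordinates). Combining: $\W_2(Y,Z)^2 \le 2\sum_{i=1}^{d-1}\sigma_i^2(\E f(Z)^2 - \E f_{(i)}(Z)^2) + 2\sigma_d^2(\E f(Z)^2 - \E f_{(d)}(Z)^2)$, which is exactly the claim.

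I expect the main obstacle to be precisely this commutation/monotonicity bookkeeping in the inductive step: ensuring that the $(d-1)$-dimensional hypothesis, applied to the averaged function $f_{(d)}$, produces terms that line up with $\E f_{(i)}(Z)^2$ for the full $f$ rather than with $\E f_{(i)(d)}(Z)^2$. The resolution is that $f \mapsto f_{(d)}$ is the conditional expectation $\E[f \mid \text{coords} \ne d]$ under the Gaussian measure, so $\E f_{(d)}(Z)^2 \le \E f(Z)^2$ and $(f_{(d)})_{(i)} = (f_{(i)})_{(d)}$, and the key inequality $\E f(Z)^2 - \E f_{(i)(d)}(Z)^2 \ge \E f_{(d)}(Z)^2 - \E (f_{(d)})_{(i)}(Z)^2$ — wait, it's the reverse we want — is handled by noting $\E f(Z)^2 - \E f_{(i)}(Z)^2 \ge \E f_{(d)}(Z)^2 - \E (f_{(d)})_{(i)}(Z)^2$ because both sides are "variance explained by coordinate $i$" and averaging out coordinate $d$ can only reduce it. Everything else (the one-dimensional base case, the conditional coupling, additivity of $\W_2^2$ across the product splitting) is routine once this is pinned down.
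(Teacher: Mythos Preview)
There is a genuine gap in the inductive step, precisely where you integrate the one-dimensional contribution. With $g(x') = \int f(x',t)\rho_d(t)\,dt$ the marginal ratio, the conditional density ratio is $f(x',\cdot)/g(x')$, so the one-dimensional variance bound reads
\[
\W_2\bigl(Y_d \mid Y' = x',\; Z_d\bigr)^2 \;\le\; 2\sigma_d^2\!\left(\frac{\E_{\rho_d}[f(x',\cdot)^2]}{g(x')^2} - 1\right)
\;=\; \frac{2\sigma_d^2}{g(x')^2}\,\Var_{\rho_d}\bigl(f(x',\cdot)\bigr).
\]
Integrating against the marginal $g(x')\rho'(x')\,dx'$ gives
\[
2\sigma_d^2 \int \frac{\Var_{\rho_d}\bigl(f(x',\cdot)\bigr)}{g(x')}\,\rho'(x')\,dx',
\]
whereas $2\sigma_d^2\bigl(\E f(Z)^2 - \E f_{(d)}(Z)^2\bigr) = 2\sigma_d^2\int \Var_{\rho_d}\bigl(f(x',\cdot)\bigr)\,\rho'(x')\,dx'$. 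The extra $1/g$ weight does not cancel, and by concentrating the conditional variance where $g$ is small one makes the first integral strictly larger than the second; so your claimed identity is false and the induction does not close. The structural reason is that relative entropy obeys the chain rule $\KL{\mu_{X'Y}}{\nu_{X'}\otimes\nu_Y} = \KL{\mu_{X'}}{\nu_{X'}} + \E_{\mu_{X'}}\KL{\mu_{Y\mid X'}}{\nu_Y}$, which is exactly what makes Talagrand's entropy bound tensorize; the $\chi^2$-divergence (i.e.\ $\E f^2 - 1$) has no such chain rule, so the variance form cannot be inducted on directly.

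The paper's route is to run the induction on the \emph{entropy} form, obtaining
\[
\W_2(Y,Z)^2 \le 2\sum_{k=1}^d \sigma_k^2\Bigl(\E f_{[k]}(Z)\log f_{[k]}(Z) - \E f_{[k-1]}(Z)\log f_{[k-1]}(Z)\Bigr),
\]
with $f_{[k]}$ the average over all but the first $k$ coordinates. Only afterward does it pass to $L^2$: Abel-sum using $\sigma_1 \ge \cdots \ge \sigma_d$ so that every entropy term carries a nonnegative coefficient, apply $t\log t \le t^2 - t$ termwise, and re-telescope to get $2\sum_k \sigma_k^2\bigl(\E f_{[k]}^2 - \E f_{[k-1]}^2\bigr)$. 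The final passage from the nested averages $f_{[k]}$ to the single-coordinate averages $f_{(k)}$ is exactly the monotonicity you isolated at the end---averaging out extra coordinates can only shrink the ``variance along coordinate $k$''---and your conditional-expectation justification for it is correct (it is Lemma~\ref{lem:conditional-L^2-lemma} in the paper). So your bookkeeping insight is right; what is missing is the detour through entropy that makes the conditional one-dimensional step integrate to the right thing.
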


The proof of Proposition \ref{prop:talagrand} uses an elementary lemma
involving conditional $L^2$ norms, which is proved in Appendix
\ref{subsec:conditional-L^2-lemma}.

\begin{lemma} \label{lem:conditional-L^2-lemma}
  Let $A \in \mathcal{A}$ and $B \in \mathcal{B}$ be independent
  random variables and consider any function $f : \mathcal{A} \times \mathcal{B} \to \R$. Define
  \[ f_A : \mathcal{B} \to \R, \qquad f_A(b) = \E\( f(A, B) \mid B = b \) \]
  \[ f_B : \mathcal{A} \to \R, \qquad f_B(a) = \E\( f(A, B) \mid A = a \). \]
  Then,
  \[ \E f(A, B)^2 + (\E f(A, B))^2 \ge \E f_A(B)^2 + \E f_B(A)^2. \]
\end{lemma}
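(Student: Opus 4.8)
The plan is to reduce the claimed inequality to an application of the Cauchy--Schwarz inequality by writing the right-hand side as a sum of projections of $f$ onto orthogonal subspaces of $L^2(\mathcal{A}\times\mathcal{B})$. Concretely, I would work in the Hilbert space $H = L^2(\mathcal{A}\times\mathcal{B}, \P_A\otimes\P_B)$, where the independence of $A$ and $B$ guarantees the joint law is the product measure. Inside $H$, let $P_{\mathcal{A}}$ denote conditional expectation given $A$ (i.e.\ averaging out $B$) and $P_{\mathcal{B}}$ conditional expectation given $B$; both are orthogonal projections, and their composition in either order equals the rank-one projection onto constants, $P_{\mathcal{A}}P_{\mathcal{B}} = P_{\mathcal{B}}P_{\mathcal{A}} = P_{0}$, with $P_0 g = \E g$. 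In this notation, $f_B = P_{\mathcal{A}} f$, $f_A = P_{\mathcal{B}} f$, and the inequality to prove reads $\|f\|^2 + \|P_0 f\|^2 \ge \|P_{\mathcal{A}} f\|^2 + \|P_{\mathcal{B}} f\|^2$.

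The key step is a clean orthogonal decomposition. Write $f = P_0 f + (P_{\mathcal{A}} f - P_0 f) + (P_{\mathcal{B}} f - P_0 f) + g$, where $g$ is whatever remains. One checks directly, using $P_{\mathcal{A}}P_{\mathcal{B}} = P_0$ and that $P_{\mathcal{A}}, P_{\mathcal{B}}$ are self-adjoint idempotents, that the four summands are pairwise orthogonal: e.g.\ $P_{\mathcal{A}} f - P_0 f$ lies in the range of $P_{\mathcal{A}}$ and is killed by $P_{\mathcal{B}}$, while $P_{\mathcal{B}} f - P_0 f$ is killed by $P_{\mathcal{A}}$, so their inner product vanishes; and $g = (I-P_{\mathcal{A}})(I-P_{\mathcal{B}})f$ is orthogonal to the ranges of both $P_{\mathcal{A}}$ and $P_{\mathcal{B}}$. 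By the Pythagorean theorem,
\[
\|f\|^2 = \|P_0 f\|^2 + \|P_{\mathcal{A}} f - P_0 f\|^2 + \|P_{\mathcal{B}} f - P_0 f\|^2 + \|g\|^2.
\]
Since $P_0$ is orthogonal to $P_{\mathcal{A}} - P_0$, we have $\|P_{\mathcal{A}} f\|^2 = \|P_0 f\|^2 + \|P_{\mathcal{A}} f - P_0 f\|^2$, and likewise for $\mathcal{B}$. Substituting these gives
\[
\|f\|^2 = \|P_{\mathcal{A}} f\|^2 + \|P_{\mathcal{B}} f\|^2 - \|P_0 f\|^2 + \|g\|^2,
\]
and dropping the nonnegative term $\|g\|^2$ yields exactly $\|f\|^2 + \|P_0 f\|^2 \ge \|P_{\mathcal{A}} f\|^2 + \|P_{\mathcal{B}} f\|^2$, as desired.

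I do not anticipate a serious obstacle here; the only point requiring a little care is justifying that $f \in L^2$ so that all the projections and norms are finite, and verifying the identity $P_{\mathcal{A}}P_{\mathcal{B}} = P_0$ rigorously (this is where independence of $A$ and $B$ is used, via Fubini). If one wants to avoid invoking Hilbert-space projection language explicitly, the same computation can be written out purely in terms of expectations: expand $\E f(A,B)^2$ by inserting $\pm f_A(B) \pm f_B(A) \pm \E f$ and collecting terms, using $\E[f_B(A) f_A(B)] = (\E f)^2$ and $\E[f_B(A)\,\E f] = (\E f)^2$, etc. The ``slack'' term $\|g\|^2 = \E\bigl[(f(A,B) - f_A(B) - f_B(A) + \E f)^2\bigr]$ is manifestly nonnegative, which is the whole content of the inequality.
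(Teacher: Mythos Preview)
Your argument is correct. The orthogonal (Hoeffding/ANOVA) decomposition $f = P_0 f + (P_{\mathcal A}f - P_0 f) + (P_{\mathcal B}f - P_0 f) + g$ does exactly what you claim, and dropping $\|g\|^2$ gives the inequality. One tiny quibble: you announce Cauchy--Schwarz in the opening sentence, but the argument you actually carry out is Pythagoras for orthogonal projections; no Cauchy--Schwarz is needed.

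The paper's proof reaches the same conclusion by a different, more elementary device: it introduces independent copies $A', B'$ and simply expands
\[
\E\bigl(f(A,B) + f(A',B') - f(A,B') - f(A',B)\bigr)^2 \ge 0,
\]
collecting the cross terms to obtain $4\E f(A,B)^2 + 4(\E f)^2 \ge 4\E f_A(B)^2 + 4\E f_B(A)^2$. The two arguments are really the same identity in disguise: the nonnegative quantity the paper squares equals $4\|g\|^2$ in your notation, since $f(a,b) - f(a',b) - f(a,b') + f(a',b') = g(a,b) - g(a',b) - g(a,b') + g(a',b')$ and the four $g$-terms are uncorrelated. Your projection formulation is more conceptual and makes the slack term $\|g\|^2$ explicit from the outset; the paper's symmetrization is shorter and avoids any Hilbert-space vocabulary, at the cost of a slightly opaque expansion. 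Either route is perfectly adequate here.
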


\begin{proof}[Proof of Proposition \ref{prop:talagrand}]
  In fact, a slightly stronger inequality holds. In order to state it,
  let us define for each $0 \le k \le d$ the function
  \[ f_{[k]}(x) = \frac{\displaystyle
    \int_{-\infty}^\infty \cdots \int_{-\infty}^\infty f\(x + \sum_{i = k + 1}^d t_ie_i\) \rho\(x + \sum_{i = k + 1}^d t_ie_i\) \,dt_{k + 1} \cdots \,dt_d
    }{\displaystyle
    \int_{-\infty}^\infty \cdots \int_{-\infty}^\infty \rho\(x + \sum_{i = k + 1}^d t_ie_i\) \,dt_{k + 1} \cdots \,dt_d
  }, \]
  which may be thought of as the ``averaging'' of $f$ over all but the
  first $k$ coordinates. Note that $f_{[d]} = f$ and $f_{[0]} = 1$.

  We claim that
  \begin{equation} \label{eq:entropy-bound}
    \W_2(Y, Z)^2 \le 2 \sum_{k = 1}^d \sigma_k^2 \cdot \E \( f_{[k]}(Z) \log f_{[k]}(Z) - f_{[k - 1]}(Z) \log f_{[k - 1]}(Z) \).
  \end{equation}
  This inequality is essentially a byproduct of the proof of Theorem
  \ref{thm:talagrand-orig} (see \cite{T96}, \S 3). Note that if
  $\sigma_k = 1$ for all $k$, then the sum in \eqref{eq:entropy-bound}
  telescopes to
  \[ 2 \E f(Z) \log f(Z) = 2 \KL{Y}{Z}, \]
  recovering Theorem \ref{thm:talagrand-orig}. Although
  \eqref{eq:entropy-bound} is a direct consequence of the arguments in
  \cite{T96}, for the sake of completeness we repeat the proof in
  Appendix \ref{subsec:entropy-bound-proof}.

  Using \eqref{eq:entropy-bound} and the fact that $t \log t \le t^2 -
  t$, we have
  \begin{align*}
    \W_2(Y, Z)^2 &\le 2 \sum_{k = 1}^d \sigma_k^2 \cdot \E \( f_{[k]}(Z) \log f_{[k]}(Z) - f_{[k - 1]}(Z) \log f_{[k - 1]}(Z) \) \\
    &= 2 \sigma_d^2 \cdot \E \( f_{[d]}(Z) \log f_{[d]}(Z) \) + 2 \sum_{k = 2}^d (\sigma_{k - 1}^2 - \sigma_k^2) \E \( f_{[k - 1]}(Z) \log f_{[k - 1]}(Z) \) \\
    &\le 2 \sigma_d^2 \cdot \E \( f_{[d]}(Z)^2 - f_{[d]}(Z)\) + 2 \sum_{k = 2}^d (\sigma_{k - 1}^2 - \sigma_k^2) \E \(f_{[k - 1]}(Z)^2 - f_{[k - 1]}(Z)\) \\
    &= 2 \sigma_d^2 \cdot \E \( f_{[d]}(Z)^2 - 1\) + 2 \sum_{k = 2}^d (\sigma_{k - 1}^2 - \sigma_k^2) \E \(f_{[k - 1]}(Z)^2 - 1\) \\
    &= 2 \sum_{k = 1}^d \sigma_k^2 \cdot \E \( f_{[k]}(Z)^2 - f_{[k - 1]}(Z)^2 \)
  \end{align*}
  Finally, for each $k$, we claim that
  \begin{equation} \label{eq:conditional-L^2-bound}
    \E \( f_{[k]}(Z)^2 - f_{[k - 1]}(Z)^2 \) \le \E \( f(Z)^2 - f_{(k)}(Z)^2 \).
  \end{equation}
  Indeed, this is actually an immediate consequence of Lemma
  \ref{lem:conditional-L^2-lemma}. To simplify notation, write $Z =
  (Z', Z'', Z''')$, where $Z'$ denotes the first $k - 1$ coordinates
  of $Z$, $Z''$ denotes the $k$-th coordinate, and $Z'''$ denotes the
  last $d - k$ coordinates. In terms of these variables, we have
  \begin{align*}
    \E f_{[k - 1]}(Z)^2 &= \E \left[ \E\( f(Z) \mid Z' \)^2 \right] \\
    \E f_{[k]}(Z)^2 &= \E \left[ \E\( f(Z) \mid Z', Z'' \)^2 \right] \\
    \E f_{(k)}(Z)^2 &= \E \left[ \E\( f(Z) \mid Z', Z''' \)^2 \right] \\
    \E f(Z)^2 &= \E \left[ \E\( f(Z)^2 \mid Z' \) \right],
  \end{align*}
  Then, applying Lemma \ref{lem:conditional-L^2-lemma} conditioned on
  $Z'$ with $A = Z''$ and $B = Z'''$ gives us precisely
  \eqref{eq:conditional-L^2-bound}. Thus, we conclude that
  \[ \W_2(Y, Z)^2 \le 2 \sum_{k = 1}^d \sigma_k^2 \cdot \E \( f(Z)^2 - f_{(k)}(Z)^2 \), \]
  as desired.
\end{proof}

\section{Proof of Lemma \ref{lem:increment}} \label{sec:lemma-proof}

We finally conclude by proving Lemma \ref{lem:increment}. Henceforth,
we use the notation in the statement of Lemma \ref{lem:increment} and
assume without loss of generality that
\[ \Sigma = \begin{bmatrix}
  \sigma_1^2 & 0 & \cdots & 0 \\
  0 & \sigma_2^2 & \cdots & 0 \\
  \vdots & \vdots & \ddots & \vdots \\
  0 & 0 & \cdots & \sigma_k^2 \\
\end{bmatrix}, \]
so that $\sigmamin = \min_{1 \le i \le k} \sigma_i$. It is more
convenient to work with the normalization $Y = \frac{1}{\sqrt{n}} X$,
so that $\|Y\| \le \frac{\beta}{\sqrt{n}}$. Our goal is then to prove
that
\[ \W_2(Z_1, Z_{1 - 1/n} + Y) \le \frac{5 \sqrt{k} \beta}{n \sqrt{n}} \]
for $n \ge \frac{5 \beta^2}{\sigmamin^2}$.

\subsection{A density computation}

The goal of this subsection is to explicitly compute the density of
$Z_{1 - 1/n} + Y$ and its marginals needed to apply Proposition
\ref{prop:talagrand}. We will want to use the approximation
\[ \frac{1}{2} \log \( 1 + \frac{1}{n^2 - 1} \) \approx \frac{1}{2(n^2 - 1)}. \]
To this end, it is convenient to define
\[ r(n) = \frac{1}{2(n^2 - 1)} - \frac{1}{2} \log \( 1 + \frac{1}{n^2 - 1} \). \]
Note that since $t - 2t^2 \le \log(1 + t) \le t$ for any $t \ge 0$, we
have for any $n \ge 2$ that
\[ 0 \le r(n) \le \frac{1}{(n^2 - 1)^2}. \]
The following lemma gives the formula for the density of $Z_{1 - 1/n}
+ Y$.

\begin{lemma} \label{lem:L^2-formula}
  Let $\rho$ be the density of $Z_1$, let $\tau$ be the density of
  $Z_{1 - 1/n} + Y$, and let $f(x) = \frac{\tau(x)}{\rho(x)}$. Then,
  \[ \E f(Z)^2 = \E \left[ \exp\( \sum_{i = 1}^k \frac{2n^2 Y_iY'_i - n Y_i^2 - n (Y'_i)^2 + \sigma_i^2}{2 \sigma_i^2 (n^2 - 1)} - r(n) \) \right], \]
  where $Y'$ is an independent copy of $Y$.
\end{lemma}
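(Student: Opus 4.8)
The plan is to reduce $\E f(Z)^2$ to a product of one-dimensional Gaussian integrals and evaluate each one by completing the square. Write $\rho_v$ for the density of a centered Gaussian with covariance $v\Sigma$, so that $\rho = \rho_1$. Conditioning on $Y$, the vector $Z_{1-1/n} + Y$ has density $x \mapsto \rho_{1-1/n}(x - Y)$, whence $\tau(x) = \E_Y\bigl[\rho_{1-1/n}(x - Y)\bigr]$ and therefore $f(x)^2\rho(x) = \tau(x)^2/\rho(x)$. Introducing an independent copy $Y'$ of $Y$ and using that $Z \sim \rho$,
\[ \E f(Z)^2 = \int_{\R^k} \frac{\tau(x)^2}{\rho(x)}\,dx = \E_{Y,Y'}\left[\,\int_{\R^k} \frac{\rho_{1-1/n}(x - Y)\,\rho_{1-1/n}(x - Y')}{\rho(x)}\,dx\,\right], \]
the interchange of expectation and integration being legitimate by Tonelli's theorem since the integrand is nonnegative.

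Since $\Sigma$ is diagonal, each of the three densities factors as a product over coordinates, so for fixed $Y = y$ and $Y' = y'$ the inner integral equals $\prod_{i=1}^k J_i(y_i, y'_i)$, where, writing $\varphi_v$ for the one-dimensional centered Gaussian density of variance $v$ and $s_i^2 = \tfrac{n-1}{n}\sigma_i^2$,
\[ J_i(a,b) = \int_{-\infty}^\infty \frac{\varphi_{s_i^2}(x - a)\,\varphi_{s_i^2}(x - b)}{\varphi_{\sigma_i^2}(x)}\,dx. \]
Each $J_i(a,b)$ is elementary: the exponent is a quadratic in $x$ whose leading coefficient is $c_i = \tfrac{1}{s_i^2} - \tfrac{1}{2\sigma_i^2} = \tfrac{n+1}{2(n-1)\sigma_i^2} > 0$ (here $n \ge 2$, which is guaranteed by the hypothesis of Lemma \ref{lem:increment} together with Remark \ref{rem:n-k-bound}), so the integral converges, and completing the square evaluates it in closed form.

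To finish, I would assemble the constants. The normalizing factor of $J_i$ works out to $\tfrac{n}{\sqrt{n^2 - 1}}$, independent of $\sigma_i$, and by the definition of $r(n)$ one has $\log\tfrac{n}{\sqrt{n^2-1}} = \tfrac12\log\bigl(1 + \tfrac{1}{n^2-1}\bigr) = \tfrac{1}{2(n^2-1)} - r(n) = \tfrac{\sigma_i^2}{2\sigma_i^2(n^2-1)} - r(n)$; meanwhile the quadratic term surviving in the exponent collapses to $\tfrac{2n^2 ab - n a^2 - n b^2}{2\sigma_i^2(n^2-1)}$. Setting $a = Y_i$, $b = Y'_i$, multiplying over $i = 1, \ldots, k$, and taking the expectation over $(Y, Y')$ then yields precisely the stated identity. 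There is no conceptual obstacle; the one place to be careful is the completion of the square, where three different scales appear ($\sigma_i^2$, $s_i^2 = \tfrac{n-1}{n}\sigma_i^2$, and the effective variance $\tfrac{1}{2c_i} = \tfrac{(n-1)\sigma_i^2}{n+1}$), and the algebraic simplification of the resulting constants must be carried out without slips.
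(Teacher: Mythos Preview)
Your proposal is correct and follows essentially the same route as the paper: both write $\E f(Z)^2$ as $\E_{Y,Y'}$ of a Gaussian integral and evaluate it by completing the square, arriving at the same closed form. The only organizational difference is that the paper carries out the $k$-dimensional Gaussian integral in one stroke via Lemma~\ref{lem:gaussian-expectation} (computing $\E\exp(a\|Z\|_{\Sigma^{-1}}^2 + b\langle Z,v\rangle_{\Sigma^{-1}})$), whereas you exploit the diagonal $\Sigma$ to factor into one-dimensional integrals $J_i$ first; the algebra and the result are identical.
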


The proof is a straightforward calculation based on the following
computational lemma, proved in Appendix
\ref{subsec:gaussian-expectation-proof}.

\begin{lemma} \label{lem:gaussian-expectation}
  Let $Z$ be a $k$-dimensional Gaussian with covariance
  $\Sigma$. Define $\langle u, v \rangle_{\Siginv} = \langle u,
  \Siginv v \rangle$ and $\| u \|_{\Siginv} = \sqrt{\langle u, u
    \rangle_{\Siginv}}$. Then,
  \[ \E \left[ \exp\( a\|Z\|_\Siginv^2 + b\langle Z, v \rangle_\Siginv \) \right] = \exp\( \frac{b^2}{2 - 4a} \|v\|_\Siginv^2 \) \cdot \( \frac{1}{1 - 2a} \)^{k/2}. \]
\end{lemma}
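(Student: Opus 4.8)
The plan is to diagonalize the problem and reduce it to a product of one-dimensional Gaussian integrals. First I would pass to the coordinate system that makes $\Sigma$ diagonal (which the paper has already arranged, with $\Sigma = \mathrm{diag}(\sigma_1^2, \ldots, \sigma_k^2)$), so that $Z = (Z_1, \ldots, Z_k)$ with the $Z_i$ independent and $Z_i \sim N(0, \sigma_i^2)$. In these coordinates, $\|Z\|_{\Siginv}^2 = \sum_i Z_i^2/\sigma_i^2$ and $\langle Z, v\rangle_{\Siginv} = \sum_i Z_i v_i/\sigma_i^2$, so the exponent splits as a sum over $i$, and the expectation factors:
\[
\E\left[\exp\( a\|Z\|_\Siginv^2 + b\langle Z, v\rangle_\Siginv \)\right] = \prod_{i=1}^k \E\left[\exp\( \frac{a Z_i^2}{\sigma_i^2} + \frac{b v_i Z_i}{\sigma_i^2} \)\right].
\]
Equivalently, and perhaps cleaner, I would substitute $W_i = Z_i/\sigma_i$ so that $W = (W_1, \ldots, W_k)$ is a standard $k$-dimensional Gaussian, $\|Z\|_\Siginv^2 = \|W\|^2$, and $\langle Z, v\rangle_\Siginv = \langle W, \Sigma^{-1/2} v\rangle$; writing $w = \Sigma^{-1/2} v$ we have $\|w\|^2 = \|v\|_\Siginv^2$, so it suffices to prove the identity for standard Gaussians.

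The core computation is then the standard one-dimensional Gaussian moment generating / Hubbard–Stratonovich identity: for a standard Gaussian $g$ and real $a < 1/2$,
\[
\E\left[\exp\(a g^2 + c g\)\right] = \frac{1}{\sqrt{2\pi}} \int_{-\infty}^\infty \exp\(a x^2 + c x - \tfrac{1}{2}x^2\) \, dx = \frac{1}{\sqrt{1 - 2a}} \exp\(\frac{c^2}{2(1 - 2a)}\),
\]
obtained by completing the square in the exponent $-\tfrac{1}{2}(1-2a)x^2 + cx$ and evaluating the resulting Gaussian integral. Applying this coordinatewise with $c = c_i = b w_i$ and multiplying over $i = 1, \ldots, k$ gives
\[
\prod_{i=1}^k \frac{1}{\sqrt{1-2a}} \exp\(\frac{b^2 w_i^2}{2(1-2a)}\) = \(\frac{1}{1-2a}\)^{k/2} \exp\(\frac{b^2 \|w\|^2}{2(1-2a)}\) = \(\frac{1}{1-2a}\)^{k/2} \exp\(\frac{b^2}{2-4a}\|v\|_\Siginv^2\),
\]
which is exactly the claimed formula.

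There is no real obstacle here; the only points requiring a word of care are (i) checking that the change of variables $W_i = Z_i/\sigma_i$ is valid, which is immediate since each $\sigma_i > 0$, and (ii) noting the implicit hypothesis $a < 1/2$ needed for the integrals to converge (in the application $a = \tfrac{n}{2(n^2-1)} < \tfrac12$, so this causes no trouble). The remainder is the routine completion of the square, which I would carry out once in dimension one and then tensorize.
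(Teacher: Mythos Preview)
Your proof is correct and essentially the same as the paper's: both simply complete the square in the Gaussian integral, the only cosmetic difference being that the paper does so directly in the $\Siginv$-inner product on $\R^k$ (and then rescales by $\sqrt{1-2a}$), whereas you first pass to standard-Gaussian coordinates and tensorize into one-dimensional integrals. One small aside: in the paper's application of this lemma the parameter is $a = -\tfrac{1}{n-1}$, not $\tfrac{n}{2(n^2-1)}$, though of course $a < \tfrac12$ holds either way.
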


\begin{proof}[Proof of Lemma \ref{lem:L^2-formula}]
  In the notation of Lemma \ref{lem:gaussian-expectation}, the formula
  for $\rho$ is
  \[ \rho(x) = \frac{1}{\sqrt{(2 \pi)^k \cdot \det \Sigma}} \exp\( - \frac{1}{2} \|x\|^2_\Siginv \). \]
  We write can $\tau$ in terms of $\rho$ by
  \begin{align*}
    \tau(x) &= \E \left[ \frac{1}{\(1 - 1/n\)^{k/2}} \cdot \rho \( \frac{x - Y}{\sqrt{1 - 1/n}}  \) \right] \\
    &= \E \left[ \frac{1}{\(1 - 1/n\)^{k/2}} \exp\( - \frac{1}{2 - 2/n}\|x - Y\|_\Siginv^2 + \frac{1}{2}\|x\|_\Siginv^2 \) \rho(x) \right] \\
    &= \E \left[ \frac{1}{\(1 - 1/n\)^{k/2}} \exp\( - \frac{\|x\|_\Siginv^2}{2n - 2} + \frac{n \langle x, Y \rangle_\Siginv}{n - 1} - \frac{n\|Y\|_\Siginv^2}{2n - 2} \) \right] \rho(x) \\
  \end{align*}
  Then, we have
  \[ f(x) = \frac{\tau(x)}{\rho(x)} = \E \left[ \frac{1}{\(1 - 1/n\)^{k/2}} \exp\( - \frac{\|x\|_\Siginv^2}{2n - 2} + \frac{n \langle x, Y \rangle_\Siginv}{n - 1} - \frac{n\|Y\|_\Siginv^2}{2n - 2} \) \right]. \]
  It follows that
  \begin{align*}
    \E f(Z)^2 &= \(1 - 1/n\)^{-k} \cdot \E \left[ \exp\( - \frac{\|Z\|_\Siginv^2}{2n - 2} + \frac{n \langle x, Y \rangle_\Siginv}{n - 1} - \frac{n\|Y\|_\Siginv^2}{2n - 2} \) \right]^2 \\
    &= \(1 - 1/n\)^{-k} \cdot \E \left[ \exp\( - \frac{\|Z\|_\Siginv^2}{n - 1} + \frac{n \langle Z, Y + Y' \rangle_\Siginv}{n - 1} \) \right] \\
    & \hphantom{= \(1 - 1/n\)^{-k}\;} \cdot\, \E \left[ \exp\( - \frac{n(\|Y\|_\Siginv^2 + \|Y'\|_\Siginv^2)}{2n - 2} \) \right],
  \end{align*}
  where we have used the fact that for any function $\alpha$, $\( \E\,
  \alpha(Y) \)^2 = \E\(\alpha(Y) \alpha(Y')\)$.

  We apply Lemma \ref{lem:gaussian-expectation} with $a = -\frac{1}{n
    - 1}$, $b = \frac{n}{n - 1}$, and $v = Y + Y'$. Note that $1 - 2a
  = 1 + \frac{2}{n - 1} = \frac{n + 1}{n - 1}$. The above expression
  then becomes
  \begin{align*}
    \E f(Z)^2 &= \(1 - 1/n\)^{-k} \cdot \( \frac{n - 1}{n + 1} \)^{k/2} \E \left[ \exp\( \frac{n^2 \|Y + Y'\|_\Siginv^2}{2(n^2 - 1)}\) \right] \\
    &\hphantom{= \(1 - 1/n\)^{-k}\;} \cdot \E\left[ \exp\(-\frac{n(\|Y\|_\Siginv^2 + \|Y'\|_\Siginv^2)}{2(n - 1)} \) \right] \\
    &= \( \frac{n^2}{n^2 - 1} \)^{k/2} \E \left[ \exp\( \frac{2n^2 \langle Y, Y' \rangle_\Siginv - n \|Y\|_\Siginv^2 - n \|Y'\|_\Siginv^2}{2(n^2 - 1)}\) \right] \\
    &= \E \left[ \exp\( \frac{2n^2 \langle Y, Y' \rangle_\Siginv - n \|Y\|_\Siginv^2 - n \|Y'\|_\Siginv^2}{2(n^2 - 1)} + \frac{k}{2} \log \( 1 + \frac{1}{n^2 - 1} \) \) \right] \\
    &= \E \left[ \exp\( \sum_{i = 1}^k \frac{2n^2 Y_iY'_i - n Y_i^2 - n (Y'_i)^2}{2 \sigma_i^2 (n^2 - 1)} + \frac{1}{2} \log \( 1 + \frac{1}{n^2 - 1} \) \) \right] \\
    &= \E \left[ \exp\( \sum_{i = 1}^k \frac{2n^2 Y_iY'_i - n Y_i^2 - n (Y'_i)^2 + \sigma_i^2}{2 \sigma_i^2 (n^2 - 1)} - r(n) \) \right]
  \end{align*}
\end{proof}

Note that any projection of $Z_{1 - 1/n} + Y$ onto a subset of its
coordinates still takes the form of a Gaussian plus an independent
random vector. Therefore, Lemma \ref{lem:L^2-formula} can also be
applied to projections of $Z$ and $Y$, leading to the following
corollary.

\begin{corollary} \label{cor:L^2-formula}
  Let $Y_i$ denote the $i$-th coordinate of $Y$. For each $1 \le i \le
  k$, define
  \[ Q_i = \frac{2n^2 Y_iY'_i - n Y_i^2 - n (Y'_i)^2 + \sigma_i^2}{2 \sigma_i^2 (n^2 - 1)} - r(n), \qquad Q = \sum_{i = 1}^k Q_i. \]
  Then, for each $i$, we have
  \[ \E f_{(i)}(Z)^2 = \E \exp\( \sum_{j \ne i} Q_j \) = \E \exp\( Q - Q_i \), \]
  where the notation $f_{(i)}$ follows that of Proposition
  \ref{prop:talagrand}.
\end{corollary}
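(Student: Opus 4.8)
The plan is to deduce Corollary \ref{cor:L^2-formula} from Lemma \ref{lem:L^2-formula} by applying the latter not to the full $k$-dimensional problem but to its projection onto the $k-1$ coordinates other than $i$, exactly as the remark preceding the corollary suggests. The first step is to recognize $f_{(i)}$ as a ratio of marginal densities. Since $\int_{-\infty}^\infty f(x + t e_i)\rho(x + t e_i)\,dt = \int_{-\infty}^\infty \tau(x + t e_i)\,dt$ is precisely the marginal density of $\tau$ obtained by integrating out the $i$-th coordinate, and likewise $\int_{-\infty}^\infty \rho(x + t e_i)\,dt$ is the corresponding marginal of $\rho$, the function $f_{(i)}(x)$ depends only on the coordinates $(x_j)_{j \ne i}$ and equals $\tau^{(i)}/\rho^{(i)}$, where $\tau^{(i)}$ and $\rho^{(i)}$ denote these two marginal densities. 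This identity is a routine application of Fubini's theorem.

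Next I would identify the laws behind $\tau^{(i)}$ and $\rho^{(i)}$. Projecting $Z_{1 - 1/n} + Y$ onto the coordinates $\ne i$ gives $(Z_{1 - 1/n})^{(i)} + Y^{(i)}$, which is again a Gaussian, of covariance $(1 - 1/n)$ times the $(k-1) \times (k-1)$ diagonal matrix with entries $\sigma_j^2$ for $j \ne i$, plus the independent vector $Y^{(i)}$ (still bounded, since $\|Y^{(i)}\| \le \|Y\|$); similarly $\rho^{(i)}$ is the density of $(Z_1)^{(i)}$. This is exactly the setup of Lemma \ref{lem:L^2-formula} with $k - 1$ in place of $k$ and index set $\{1, \ldots, k\} \setminus \{i\}$ in place of $\{1, \ldots, k\}$; the proof of that lemma is a pure computation that goes through verbatim in any dimension with any positive diagonal covariance. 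Applying it gives $\E f_{(i)}(Z)^2 = \E \exp\bigl(\sum_{j \ne i} Q_j\bigr)$, because the summand attached to coordinate $j$ in the conclusion of Lemma \ref{lem:L^2-formula} is exactly $Q_j$ — it involves only $Y_j$, $Y'_j$, $\sigma_j$, and $n$, and does not refer to the ambient dimension. Since $Q = \sum_{j=1}^k Q_j$, this equals $\E \exp(Q - Q_i)$, which is the assertion.

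The steps above are bookkeeping rather than substance, and I do not expect a genuine obstacle. The two points requiring a little care are: verifying the identity $f_{(i)} = \tau^{(i)}/\rho^{(i)}$; and observing that $\E f_{(i)}(Z)^2$, with the expectation taken over the full $k$-dimensional $Z = Z_1$, coincides with the expectation of $f_{(i)}(Z^{(i)})^2$ over the $(k-1)$-dimensional marginal of $Z$ — immediate, since $f_{(i)}$ ignores the $i$-th coordinate and that marginal is precisely $(Z_1)^{(i)}$. One should also note that the scalar correction $r(n)$, which appears once per coordinate inside the sum defining $Q$, is simply dropped together with the term $Q_i$ when coordinate $i$ is removed; this matches the form of Lemma \ref{lem:L^2-formula} applied in dimension $k-1$. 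In short, the corollary is essentially Lemma \ref{lem:L^2-formula} again, combined with the observation that both sides factor coordinatewise.
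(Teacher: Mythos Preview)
Your proposal is correct and follows exactly the paper's approach: project onto the coordinates other than $i$ and reapply Lemma~\ref{lem:L^2-formula} in dimension $k-1$, noting that the summand attached to each coordinate $j$ is precisely $Q_j$. Your additional remarks about $f_{(i)} = \tau^{(i)}/\rho^{(i)}$ and about the expectation over the full $Z$ versus its marginal are sound bookkeeping that the paper leaves implicit.
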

\begin{proof}
  Let $P_{(i)} : \R^k \to \R^{k - 1}$ denote the projection onto all
  but the $i$-th coordinate. Then, the result follows by replacing
  $Z_{1 - 1/n}$ and $Y$ in Lemma \ref{lem:L^2-formula} with
  $P_{(i)}(Z_{1 - 1/n})$ and $P_{(i)}(Y)$, respectively.
\end{proof}

\subsection{Some computational estimates of the $Q_i$}

Our strategy was to bound $\W_2$ distance via Proposition
\ref{prop:talagrand}, which reduces the problem to estimating various
densities. By Lemma \ref{lem:L^2-formula} and Corollary
\ref{cor:L^2-formula}, we have now expressed the densities of interest
in terms of the quantities $Q_i$, so the next step is to estimate the
$Q_i$. In what follows, recall that we assumed $n \ge \frac{5
  \beta^2}{\sigma_i^2}$ for each $i$, and consequently, $n \ge 5k$
(see Remark \ref{rem:n-k-bound}). Also, recall that by assumption we
have
\[ \|Y\| \le \frac{\beta}{\sqrt{n}}, \quad \E Y_i = 0, \quad\text{and}\quad \E Y_i^2 = \frac{\sigma_i^2}{n}. \]
The bounds we obtain are summarized in the next two lemmas.

\begin{lemma} \label{lem:|Q|-estimates}
  We have
  \[ |Q_i| \le \frac{n^2 |Y_iY'_i|}{\sigma_i^2(n^2 - 1)} + \frac{1}{2n}, \qquad |Q| \le 1, \quad\text{and}\quad |Q - Q_i| \le 1. \]
\end{lemma}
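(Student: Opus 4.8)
The plan is to isolate in each $Q_i$ the ``bilinear'' term $B_i := \frac{n^2 Y_i Y_i'}{\sigma_i^2(n^2-1)}$ and to bound the remainder $R_i := Q_i - B_i = \frac{\sigma_i^2 - n Y_i^2 - n (Y_i')^2}{2\sigma_i^2(n^2-1)} - r(n)$ by $\frac{1}{2n}$; the first claimed inequality is then just the triangle inequality $|Q_i| \le |B_i| + |R_i|$. The elementary inputs I would use are: from $\|Y\| \le \beta/\sqrt n$ and $n \ge 5\beta^2/\sigma_i^2$ one gets $0 \le Y_i^2 \le \|Y\|^2 \le \beta^2/n \le \sigma_i^2/5$, hence $0 \le nY_i^2/\sigma_i^2 \le n/5$ (and likewise for $Y_i'$); the bound $0 \le r(n) \le (n^2-1)^{-2}$ recorded just before the lemma; and $n \ge 5k \ge 5$ from Remark \ref{rem:n-k-bound}. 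Since the numerator $\sigma_i^2 - nY_i^2 - n(Y_i')^2$ is \emph{not} sign-definite, I would bound $R_i$ from above and from below separately: it lies between $-\frac{n}{5(n^2-1)} - (n^2-1)^{-2}$ and $\frac{1}{2(n^2-1)}$, and for $n \ge 5$ (where $n^2 - 1 \ge \tfrac{24}{25}n^2$) both endpoints have absolute value at most $\frac{1}{2n}$ after a one-line computation.

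For $|Q| \le 1$, I would sum the first inequality over $i$ to obtain $|Q| \le \frac{n^2}{n^2-1}\sum_{i=1}^k \frac{|Y_iY_i'|}{\sigma_i^2} + \frac{k}{2n}$. By AM--GM, $|Y_iY_i'| \le \tfrac12(Y_i^2 + (Y_i')^2)$, and since $\sigma_i \ge \sigmamin$ for every $i$, $\sum_{i=1}^k \frac{|Y_iY_i'|}{\sigma_i^2} \le \frac{1}{2\sigmamin^2}\big(\|Y\|^2 + \|Y'\|^2\big) \le \frac{\beta^2}{n\,\sigmamin^2} \le \frac15$. Together with $\frac{k}{2n} \le \frac{1}{10}$ (from $n \ge 5k$) and $\frac{n^2}{n^2-1} \le \frac{25}{24}$ for $n \ge 5$, this yields $|Q| \le \frac{25}{24}\cdot\frac15 + \frac{1}{10} < 1$. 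The bound $|Q - Q_i| \le 1$ follows from the identical estimate applied to $\sum_{j \ne i} Q_j$, which only drops one nonnegative term from the right-hand side.

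I do not expect a genuine obstacle here: the argument is a sequence of crude triangle-inequality and AM--GM estimates, and the only thing requiring attention is that every constant must survive under the weakest hypothesis actually available, namely $n \ge 5$ rather than $n \to \infty$. For that reason I would record the inequality $n^2 - 1 \ge \tfrac{24}{25}n^2$ (valid for $n \ge 5$) once at the start and reuse it throughout, and I would be careful to treat the two signs of the numerator of $R_i$ separately as noted above.
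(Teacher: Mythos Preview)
Your argument is correct and is essentially the same as the paper's: isolate the bilinear piece $\frac{n^2|Y_iY_i'|}{\sigma_i^2(n^2-1)}$, bound the remaining terms by $\frac{1}{2n}$ using $nY_i^2\le\beta^2\le n\sigma_i^2/5$ and $0\le r(n)\le(n^2-1)^{-2}$, then sum over $i$ and control $\sum_i|Y_iY_i'|/\sigma_i^2$ via $\|Y\|,\|Y'\|\le\beta/\sqrt n$. The only cosmetic differences are that the paper applies the triangle inequality to every term of $Q_i$ at once (rather than bounding your $R_i$ from above and below separately) and uses Cauchy--Schwarz $\sum_i|Y_iY_i'|\le\|Y\|\,\|Y'\|$ in place of your AM--GM step; both routes land on the same numerical estimate.
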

\begin{proof}
  To prove the first inequality, we have
  \begin{align*}
    |Q_i| &= \left| \frac{2n^2 Y_iY'_i - n Y_i^2 - n (Y'_i)^2 + \sigma_i^2}{2 \sigma_i^2 (n^2 - 1)} - r(n) \right| \\
    &\le \frac{n^2 |Y_iY'_i|}{\sigma_i^2(n^2 - 1)} + \frac{\beta^2}{\sigma_i^2(n^2 - 1)} + \frac{1}{n^2 - 1} + r(n) \\
    &\le \frac{n^2 |Y_iY'_i|}{\sigma_i^2(n^2 - 1)} + \frac{n}{5(n^2 - 1)} + \frac{1}{n^2 - 1} + \frac{1}{(n^2 - 1)^2} \\
    &\le \frac{n^2 |Y_iY'_i|}{\sigma_i^2(n^2 - 1)} + \frac{1}{2n}.
  \end{align*}
  Summing over all $i$, we obtain
  \begin{align*}
    |Q| &\le \sum_{i = 1}^k |Q_i| \le \sum_{i = 1}^k \( \frac{n^2 |Y_iY'_i|}{\sigma_i^2(n^2 - 1)} + \frac{1}{2n} \) \le \frac{n^2}{\sigmamin^2 (n^2 - 1)} \( \sum_{i = 1}^k |Y_iY'_i| \) + \frac{k}{2n} \\
    &\le \frac{n \beta^2}{\sigmamin^2(n^2 - 1)} + \frac{k}{2n} \le \frac{n^2}{5(n^2 - 1)} + \frac{1}{2} \le 1,
  \end{align*}
  proving the second inequality. The third inequality follows by a
  similar argument, except that we omit one of the $|Q_i|$ terms in
  the sum.
\end{proof}

\begin{lemma} \label{lem:Q-estimates}
  We have
  \begin{align}
    \E Q_i &= - \frac{1}{2(n^2 - 1)} - r(n) \label{eq:Q_i-bound} \\
    \E Q_i Q_j &\le \frac{n^2}{(n^2 - 1)^2} \delta_{ij} + \frac{n^2 \E Y_i^2 Y_j^2}{2 \sigma_i^2 \sigma_j^2 (n^2 - 1)^2} + \frac{1}{2(n^2 - 1)^2} \label{eq:Q_iQ_j-bound} \\
    \E Q_i^2 &\le \frac{2n^2 + n + 1}{2(n^2 - 1)^2} \label{eq:Q_i^2-bound} \\
    \E (Q - Q_i)Q_i &\le \frac{nk}{2(n^2 - 1)^2} \label{eq:(Q - Q_i)Q_i-bound} \\
    \E Q^2 &\le \frac{2k}{n^2 - 1}. \label{eq:Q^2-bound}
  \end{align}
\end{lemma}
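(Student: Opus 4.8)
The plan is to derive all five bounds from one second-moment computation. Write $r = r(n)$, set $c_i = \frac{1}{2\sigma_i^2(n^2-1)}$ and $\xi_i = 2n^2 Y_iY'_i - nY_i^2 - n(Y'_i)^2 + \sigma_i^2$, so that $Q_i = c_i\xi_i - r$. The only facts I will use about the $Y_i$ are that $Y$ and $Y'$ are i.i.d., $\E Y_i = 0$, $\E Y_i^2 = \sigma_i^2/n$, and $\E Y_iY_j = 0$ for $i \ne j$ (this last is exactly where diagonality of $\Sigma$ enters), together with $\|Y\|^2 \le \beta^2/n$ and $n \ge 5\beta^2/\sigma_i^2$ for each $i$; by Remark \ref{rem:n-k-bound} the latter also gives $n \ge 5k \ge 5$, which I use below to control the error terms involving $r$. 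The first estimate is immediate: $\E\xi_i = -n\cdot\frac{\sigma_i^2}{n} - n\cdot\frac{\sigma_i^2}{n} + \sigma_i^2 = -\sigma_i^2$, so $\E Q_i = c_i\E\xi_i - r = -\frac{1}{2(n^2-1)} - r$, which is \eqref{eq:Q_i-bound}.

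For the rest, the heart of the matter is computing $\E[\xi_i\xi_j]$. I would expand $\xi_i\xi_j$ into its sixteen monomials; every monomial of odd total degree in the coordinates of $Y$ or of $Y'$ vanishes by mean-zero and independence, and the surviving terms give, after a routine calculation,
\[ \E[\xi_i\xi_j] = 4n^2\sigma_i^4\,\delta_{ij} + 2n^2\,\E[Y_i^2Y_j^2] - \sigma_i^2\sigma_j^2, \]
where the $\delta_{ij}$ term comes from $\E\!\left[(2n^2 Y_iY'_i)(2n^2 Y_jY'_j)\right] = 4n^4(\E Y_iY_j)^2$, equal to $4n^2\sigma_i^4$ when $i=j$ and (by diagonality) to $0$ otherwise. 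Expanding $Q_iQ_j = (c_i\xi_i - r)(c_j\xi_j - r)$ and using $\E\xi_i = -\sigma_i^2$ then yields
\[ \E[Q_iQ_j] = \frac{n^2\,\delta_{ij}}{(n^2-1)^2} + \frac{n^2\,\E[Y_i^2Y_j^2]}{2\sigma_i^2\sigma_j^2(n^2-1)^2} - \frac{1}{4(n^2-1)^2} + \frac{r}{n^2-1} + r^2, \]
and since $0 \le r \le (n^2-1)^{-2}$ and $n \ge 5$, the last three terms together are at most $\frac{1}{2(n^2-1)^2}$, which gives \eqref{eq:Q_iQ_j-bound}.

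The remaining three estimates are bookkeeping consequences. Bound \eqref{eq:Q_i^2-bound} is the case $i=j$ of \eqref{eq:Q_iQ_j-bound} together with $\E Y_i^4 \le \frac{\beta^2}{n}\,\E Y_i^2 = \frac{\beta^2\sigma_i^2}{n^2} \le \frac{\sigma_i^4}{5n}$ (using $Y_i^2 \le \|Y\|^2 \le \beta^2/n$ a.s.\ and $n \ge 5\beta^2/\sigma_i^2$), which makes the middle term at most $\frac{n}{10(n^2-1)^2}$; then $\frac{n^2}{(n^2-1)^2} + \frac{n}{10(n^2-1)^2} + \frac{1}{2(n^2-1)^2} \le \frac{2n^2+n+1}{2(n^2-1)^2}$. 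For \eqref{eq:(Q - Q_i)Q_i-bound} I would sum \eqref{eq:Q_iQ_j-bound} over $j \ne i$: the $k-1$ constant terms contribute $\frac{k-1}{2(n^2-1)^2}$, and since $\sum_j \frac{Y_j^2}{\sigma_j^2} \le \frac{\|Y\|^2}{\sigma_k^2} \le \frac{\beta^2}{n\sigma_k^2} \le \frac15$, the $\E[Y_i^2Y_j^2]$ terms contribute at most $\frac{n^2}{2\sigma_i^2(n^2-1)^2}\cdot\frac15\,\E Y_i^2 = \frac{n}{10(n^2-1)^2}$, for a total of at most $\frac{n/5 + k - 1}{2(n^2-1)^2} \le \frac{nk}{2(n^2-1)^2}$ (since $k(n-1) \ge n-1 \ge n/5$). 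Finally \eqref{eq:Q^2-bound} follows from $\E Q^2 = \sum_i \E Q_i^2 + \sum_i \E[(Q-Q_i)Q_i]$ combined with \eqref{eq:Q_i^2-bound} and \eqref{eq:(Q - Q_i)Q_i-bound}, which give $\E Q^2 \le \frac{k(2n^2+n+1)+nk^2}{2(n^2-1)^2}$; one then checks $k(2n^2+n+1)+nk^2 \le 4k(n^2-1)$ using $k \le n/5$ and $n \ge 5$.

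The only genuinely delicate step is getting the sixteen-term expansion of $\E[\xi_i\xi_j]$ right and then confirming that, after the $r$-errors accumulate through \eqref{eq:Q_iQ_j-bound}--\eqref{eq:Q^2-bound}, every explicit constant still sits under the stated right-hand side. Two quantitative inputs make this work and are worth isolating at the outset: the almost-sure bound $\E Y_i^4 \le \sigma_i^4/(5n)$ and the total-relative-variance bound $\sum_j Y_j^2/\sigma_j^2 \le 1/5$, both coming from $\|Y\| \le \beta/\sqrt n$ and $n \ge 5\beta^2/\sigmamin^2$; without the second, the off-diagonal cross terms in \eqref{eq:(Q - Q_i)Q_i-bound} and \eqref{eq:Q^2-bound} would not sum to an $O(k/n^2)$ quantity. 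It is also worth stressing that diagonality of $\Sigma$ is used exactly once --- to kill the $4n^4(\E Y_iY_j)^2$ term for $i\ne j$, which would otherwise be of order $n^2$ and render the bounds false.
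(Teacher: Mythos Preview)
Your proof is correct and follows essentially the same route as the paper: compute $\E Q_i$ directly, expand the product to bound $\E Q_iQ_j$, and then deduce \eqref{eq:Q_i^2-bound}--\eqref{eq:Q^2-bound} from \eqref{eq:Q_iQ_j-bound} by the same summations and the same a.s.\ bounds coming from $\|Y\|\le\beta/\sqrt n$ and $n\ge 5\beta^2/\sigmamin^2$. The only cosmetic difference is that the paper first shifts $Q_i$ by the constant $-\tfrac{1}{2(n^2-1)}+r(n)$ to drop the $r$-terms before expanding, whereas you keep the exact expression for $\E Q_iQ_j$ and bound the $r$-terms afterward; the arithmetic is the same either way.
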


\begin{proof}
  To show \eqref{eq:Q_i-bound}, we may compute
  \[ \E Q_i = \frac{-\sigma_i^2 - \sigma_i^2 + \sigma_i^2}{2 \sigma_i^2(n^2 - 1)} - r(n) = - \frac{1}{2(n^2 - 1)} - r(n). \]
  To show \eqref{eq:Q_iQ_j-bound}, we have
  \begin{align*}
    \E Q_i Q_j &\le \E \left[ \(Q_i - \frac{1}{2(n^2 - 1)} + r(n) \) \(Q_j - \frac{1}{2(n^2 - 1)} + r(n) \) \right] \\
    &\le \E \left[ \( \frac{2n^2 Y_iY'_i - nY_i^2 - n(Y'_i)^2}{2 \sigma_i^2 (n^2 - 1)} \) \( \frac{2n^2 Y_jY'_j - nY_j^2 - n(Y'_j)^2}{2 \sigma_j^2 (n^2 - 1)} \) \right] \\
    &= \frac{4n^4 \E Y_iY_jY'_iY'_j + n^2 \E \left[ (Y_i^2 + (Y'_i)^2)(Y_j^2 + (Y'_j)^2) \right]}{4 \sigma_i^2 \sigma_j^2 (n^2 - 1)^2} \\
    &= \frac{4n^2 \sigma_i^2 \sigma_j^2 \delta_{ij} + 2n^2 \E Y_i^2 Y_j^2 + 2\sigma_i^2 \sigma_j^2}{4 \sigma_i^2 \sigma_j^2 (n^2 - 1)^2} \\
    &= \frac{n^2}{(n^2 - 1)^2} \delta_{ij} + \frac{n^2 \E Y_i^2 Y_j^2}{2 \sigma_i^2 \sigma_j^2 (n^2 - 1)^2} + \frac{1}{2(n^2 - 1)^2}. \\
  \end{align*}
  Finally, we can deduce \eqref{eq:Q_i^2-bound}, \eqref{eq:(Q -
    Q_i)Q_i-bound}, and \eqref{eq:Q^2-bound} from
  \eqref{eq:Q_iQ_j-bound}. Setting $i = j$ in \eqref{eq:Q_iQ_j-bound}
  yields
  \begin{align*}
    \E Q_i^2 &\le \frac{n^2}{(n^2 - 1)^2} + \frac{n^2 \E Y_i^4}{2 \sigma_i^4 (n^2 - 1)^2} + \frac{1}{2(n^2 - 1)^2} \\
    &\le \frac{2n^2 + 1}{2(n^2 - 1)^2} + \frac{n \beta^2 \E Y_i^2}{2 \sigma_i^4 (n^2 - 1)^2} \\
    &= \frac{2n^2 + 1 + \beta^2 / \sigma_i^2}{2(n^2 - 1)^2} \le \frac{2n^2 + n + 1}{2(n^2 - 1)^2}, \\
  \end{align*}
  proving \eqref{eq:Q_i^2-bound}. If instead we sum
  \eqref{eq:Q_iQ_j-bound} over all $j \ne i$, we obtain
  \begin{align*}
    \E (Q - Q_i)Q_i &\le \frac{n^2}{2 \sigma_i^2 (n^2 - 1)^2} \E \( Y_i^2 \sum_{j \ne i} \frac{1}{\sigma_j^2} Y_j^2 \) + \frac{k - 1}{2(n^2 - 1)^2} \\
    &\le \frac{n \beta^2}{2 \sigma_i^2 (n^2 - 1)^2} \E \( \sum_{j \ne i} \frac{1}{\sigma_j^2} Y_j^2 \) + \frac{k - 1}{2(n^2 - 1)^2} \\
    &= \frac{(k - 1)(\beta^2 / \sigma_i^2 + 1)}{2(n^2 - 1)^2} \le \frac{nk}{2(n^2 - 1)^2}, \\
  \end{align*}
  proving \eqref{eq:(Q - Q_i)Q_i-bound}. Finally, adding
  \eqref{eq:Q_i^2-bound} and \eqref{eq:(Q - Q_i)Q_i-bound} and summing
  over all $i$, we obtain
  \begin{align*}
    \E Q^2 &= \sum_{i = 1}^k \( \E Q_i^2 + \E (Q - Q_i)Q_i \) \le k \cdot \frac{2n^2 + n + 1 + nk}{2(n^2 - 1)^2} \\
    &\le \frac{k(3n^2 + n + 1)}{2(n^2 - 1)^2} \le \frac{k(4n^2 - 4)}{2(n^2 - 1)^2} = \frac{2k}{n^2 - 1}, \\
  \end{align*}
  which proves \eqref{eq:Q^2-bound}.
\end{proof}

\subsection{Completing the proof}

Proving Lemma \ref{lem:increment} is now a matter of assembling
together all of the bounds we have established.

\begin{proof}[Proof of Lemma \ref{lem:increment}]
  By Proposition \ref{prop:talagrand} and Corollary
  \ref{cor:L^2-formula}, we have
  \begin{align*}
    \W_2\( Z_1, Z_{1 - 1/n} + Y \)^2 &\le 2 \sum_{i = 1}^k \sigma_i^2 \( \E f(Z)^2 - \E f_{(i)}(Z)^2 \) \\
    &\le 2 \sum_{i = 1}^k \sigma_i^2 \E \( e^Q - e^{Q - Q_i} \).
  \end{align*}
  Thus, it remains to estimate the terms $\sigma_i^2 \E\( e^Q - e^{Q -
    Q_i} \)$. We do this by Taylor expanding the exponential. Define
  \[ R(t) = e^t - 1 - t - \frac{1}{2}t^2, \]
  so that
  \[ e^t = 1 + t + \frac{1}{2}t^2 + R(t). \]
  By Lemma \ref{lem:Q-estimates}, we can estimate the first and second
  order terms
  \begin{align*}
    \E (Q - (Q - Q_i)) &= \E Q_i \le -\frac{1}{2(n^2 - 1)} \\
    \frac{1}{2} \E (Q^2 - (Q - Q_i)^2) &= \E \( \frac{1}{2} Q_i^2 + (Q - Q_i)Q_i \) \le \frac{n^2 + n/2 + 1/2}{2(n^2 - 1)^2} + \frac{nk}{2(n^2 - 1)^2} \\
    &\le \frac{(n^2 + nk - 1) + nk}{2(n^2 - 1)^2} = \frac{1}{2(n^2 - 1)} + \frac{nk}{(n^2 - 1)^2}. \\
  \end{align*}
  To estimate the remainder term $R(Q) - R(Q - Q_i)$, note
  that for any $a, b \in [-1, 1]$,
  \begin{align*}
    |R(a) - R(b)| &= \left| (a - b) \sum_{m = 3}^\infty \frac{1}{m!} (a^{m - 1} + ab^{m - 2} + \ldots + b^{m - 1}) \right| \\
    &\le |a - b| \cdot \sum_{m = 3}^\infty \frac{1}{(m - 1)!} \( \frac{a^2 + b^2}{2} \) \le |a - b| \cdot \( \frac{a^2 + b^2}{2} \) \\
    &\le |a - b| \cdot \( \frac{a^2 + b^2 + (2a - b)^2}{2} \) = |a - b| \cdot \( \frac{3}{2} a^2 + (a - b)^2 \). \\
  \end{align*}
  In particular, by Lemma \ref{lem:|Q|-estimates}, both $Q$ and $Q -
  Q_i$ are in $[-1, 1]$, so
  \begin{align*}
    \E \left[ R(Q) - R(Q - Q_i) \right] &\le \E \left[ |Q_i| \( \frac{3}{2} Q^2 + Q_i^2 \) \right] \\
    &\le \frac{3}{2} \E |Q_i|Q^2 + \E \left[ \( \frac{n |Y_iY'_i|^2}{\sigma_i^2(n^2 - 1)} + \frac{1}{2n} \) Q_i^2 \right] \\
    &\le \frac{3}{2} \E |Q_i|Q^2 + \( \frac{n \beta^2}{\sigma_i^2(n^2 - 1)} + \frac{1}{2n} \) \cdot \frac{2n^2 + n + 1}{2(n^2 - 1)^2} \\
    &\le \frac{3}{2} \E |Q_i|Q^2 + \frac{2 \beta^2}{\sigma_i^2 n^3} + \frac{1}{n^3}.
  \end{align*}
  Thus,
  \begin{align*}
    \sigma_i^2 \E \( e^Q - e^{Q - Q_i} \) &= \sigma_i^2 \bigg( \E (Q - (Q - Q_i)) + \frac{1}{2} \E (Q^2 - (Q - Q_i)^2) \\
    &\hphantom{= \sigma_i^2 \bigg(\;} + \E(R(Q) - R(Q - Q_i)) \bigg) \\
    &\le \sigma_i^2 \bigg( - \frac{1}{2(n^2 - 1)} + \frac{1}{2(n^2 - 1)} + \frac{nk}{(n^2 - 1)^2} \\
    &\hphantom{= \sigma_i^2 \bigg(\;} + \frac{3}{2} \E |Q_i| Q^2 + \frac{2 \beta^2}{\sigma_i^2n^3} + \frac{1}{n^3} \bigg) \\
    &\le \frac{3}{2} \E \Big( \sigma_i^2 |Q_i| Q^2 \Big) + \frac{2 \beta^2}{n^3} + \frac{nk\sigma_i^2}{(n^2 - 1)^2} + \frac{\sigma_i^2}{n^3}. \\
    &\le \frac{3}{2} \E \Big( \sigma_i^2 |Q_i| Q^2 \Big) + \frac{2 \beta^2}{n^3} + \frac{3k \sigma_i^2}{n^3}. \\
  \end{align*}
  Summing over all $i$, we have
  \begin{align*}
    \sum_{i = 1}^k \sigma_i^2 \E \( e^Q - e^{Q - Q_i} \) &\le \frac{3}{2} \E \( Q^2 \sum_{i = 1}^k \sigma_i^2 |Q_i| \) + \frac{2 k \beta^2}{n^3} + \frac{3k}{n^3} \sum_{i = 1}^k \sigma_i^2 \\
    &\le \frac{3}{2} \E \( Q^2 \sum_{i = 1}^k \sigma_i^2 |Q_i| \) + \frac{5k \beta^2}{n^3} \\
    &\le \frac{3}{2} \E \( Q^2 \sum_{i = 1}^k \sigma_i^2 \( \frac{n^2 |Y_iY'_i|}{\sigma_i^2(n^2 - 1)} + \frac{1}{2n} \) \) + \frac{5k \beta^2}{n^3} \\
    &\le \frac{3}{2} \E Q^2 \( \frac{n \beta^2}{n^2 - 1} + \frac{\beta^2}{2n} \) + \frac{5k \beta^2}{n^3} \\
    &\le \frac{3k}{n^2 - 1} \cdot \frac{2n \beta^2}{n^2 - 1} + \frac{5k \beta^2}{n^3} \le \frac{25k \beta^2}{2n^3}, \\
  \end{align*}
  and so
  \[ \W_2\( Z_1, Z_{1 - 1/n} + Y \)^2 \le 2 \sum_{i = 1}^k \sigma_i^2 \E \( e^Q - e^{Q - Q_i} \) \le \frac{25k \beta^2}{n^3}, \]
  as desired.
\end{proof}

\section{Appendix} \label{sec:appendix}

\subsection{Proof of Proposition \ref{prop:lower-bound}} \label{subsec:lower-bound-proof}
\begin{proof}
  Let $\ell_n = \frac{\beta}{\sqrt{n}}$, and consider the lattice $L =
  \ell_n \Z^d$. For any $x \in \R^d$, let $d_L(x)$ denote the minimum
  Euclidean distance from $x$ to $L$. Note that $S_n$ takes values in
  $L$. Thus, letting $\rho$ denote the density of $Z$, we have
  \[ \W_2(S_n, Z) \ge \int \rho(x) d_L(x) \,dx. \]

  To estimate the right hand side, for any $y \in L$, let $Q_n(y)$
  denote the cube of side length $\ell_n$ centered at $y$ (which is
  also the set of points in $\R^d$ closer to $y$ than to any other
  point in $L$). We find that
  \begin{align}
    \frac{1}{\Vol Q_n(y)} \int_{Q_n(y)} d_L(x) \,dx &= \frac{\ell_n}{2^d} \int_{[-1,1]^d} \|x\| \,dx = \frac{\ell_n}{2^d} \int_{[-1,1]^d} \sqrt{x_1^2 + \cdots + x_d^2} \,dx \nonumber \\
    &\ge \frac{\ell_n}{2^d} \int_{[-1,1]^d} \frac{1}{\sqrt{d}}\(|x_1| + \cdots + |x_d|\) \,dx = \frac{1}{2} \ell_n \sqrt{d}. \label{eq:cube-dist}
  \end{align}

  Next, let $M$ be large enough so that,
  \[ \int_{[-M,M]^d} \rho(x) \,dx \ge \frac{1}{2}, \]
  and let
  \begin{equation}
    r_n = \inf_{\substack{x, y \in [-2M, 2M]^d \\ \|x - y\| \le \sqrt{d} \ell_n}} \frac{\rho(x)}{\rho(y)}. \label{eq:r_n-def}
  \end{equation}
  Note that since $\rho$ is positive and continuous, we have $\lim_{n
    \rightarrow \infty} r_n = 1$.

  Assume now that $n$ is sufficiently large so that $\ell_n <
  M$. Combining \eqref{eq:r_n-def} with \eqref{eq:cube-dist}, we have
  for each $y \in L \cap [-M,M]^d$ that
  \begin{align*}
    \int_{Q_n(y)} \rho(x) d_L(x) \,dx &\ge \frac{r_n}{\Vol Q_n(y)} \int_{Q_n(y)} \rho(x) \,dx \cdot \int_{Q_n(y)} d_L(x) \,dx \\
    &\ge \frac{r_n \ell_n \sqrt{d}}{2} \int_{Q_n(y)} \rho(x) \,dx.
  \end{align*}
  Summing over all such $y$ yields
  \begin{align*}
    \W_2(S_n, Z) &\ge \int \rho(x) d_L(x) \,dx \ge \int_{[-2M,2M]^d} \rho(x) d_L(x) \,dx \\
    &\ge \sum_{y \in L \cap [-M,M]^d} \int_{Q_n(y)} \rho(x) d_L(x) \,dx \\
    &\ge \frac{r_n \ell_n \sqrt{d}}{2} \int_{[-M,M]^d} \rho(x) \,dx \ge \frac{r_n \beta \sqrt{d}}{4 \sqrt{n}}.
  \end{align*}
  Multiplying both sides by $\sqrt{n}$ and taking limits gives the
  result.
\end{proof}

\subsection{Proof of Proposition \ref{prop:w2-to-convex}} \label{subsec:w2-to-convex-proof}
\begin{proof}
  We prove the result with $C = 5$. Let $A \subset \R^d$ be a given
  convex set. For a parameter $\epsilon$ to be specified later, define
  \[ A^\epsilon = \{ x \in \R^d \mid \sup_{a \in A} \|x - a\| \le \epsilon \} \]
  \[ A_\epsilon = \{ x \in \R^d \mid \inf_{a \in \R^d \setminus A} \|x - a\| \ge \epsilon \}. \]
  Ball \cite{B93} showed a $4d^{1/4}$ upper bound\footnote{The
    constant was later improved to $(2 \pi)^{-1/4} \approx 0.64$ by
    Nazarov \cite{N03}, who also constructed an example with surface
    area of order $d^{1/4}$.} for the Gaussian surface area of any
  convex set in $\R^d$. Hence,\footnote{This is also given as equation
    (1.4) in \cite{B03}.}
  \[ \P\(Z \in A^\epsilon \setminus A\) \le 4 \epsilon d^{1/4}, \text{ and } \P\(Z \in A \setminus A_\epsilon\) \le 4 \epsilon d^{1/4}. \]
  We may regard $T$ as being coupled to $Z$ so that $\E \|T - Z\|^2 =
  \W_2(T, Z)^2$. Then,
  \begin{align*}
    \P(T \in A) &\le \P(\|T - Z\| \le \epsilon, \; T \in A) + \P(\|T - Z\| > \epsilon) \\
    &\le \P(Z \in A^\epsilon) + \epsilon^{-2} \W_2(T, Z)^2 \\
    &\le \P(Z \in A) + 4 \epsilon d^{1/4} + \epsilon^{-2} \W_2(T, Z)^2
  \end{align*}
  Similarly,
  \begin{align*}
    \P(Z \in A) &\le \P(Z \in A_\epsilon) + 4 \epsilon d^{1/4} \\
    &\le \P(\|T - Z\| \le \epsilon, \; Z \in A_\epsilon) + \P(\|T - Z\| > \epsilon) + 4 \epsilon d^{1/4} \\
    &\le \P(T \in A) + \epsilon^{-2}\W_2(T, Z)^2 + 4 \epsilon d^{1/4}.
  \end{align*}
  Thus,
  \[ |\P(T \in A) - \P(Z \in A)| \le \epsilon^{-2}\W_2(T, Z)^2 + 4 \epsilon d^{1/4}, \]
  and taking $\epsilon = d^{-1/12} \W_2(T, Z)^{2/3}$ gives
  the result.
\end{proof}

\subsection{Proof of Lemma \ref{lem:conditional-L^2-lemma}} \label{subsec:conditional-L^2-lemma}
\begin{proof}
  Let $A'$ and $B'$ be independent copies of $A$ and $B$. Then,
  \[ \E \Big( f(A, B) + f(A', B') - f(A, B') - f(A', B) \Big)^2 \ge 0. \]
  Expanding yields
  \begin{align*}
    4 \E f(A, B)^2 + 4 (\E f(A, B))^2 &= 4 \E f(A, B)^2 + 2 \E f(A, B) f(A', B') \\
    &\hphantom{==} + 2 \E f(A, B') f(A', B) \\
    &\ge 2 \E f(A, B)f(A, B') + 2 \E f(A, B) f(A', B) \\
    &\hphantom{==} + 2 \E f(A', B') f(A, B') + 2 \E f(A', B') f(A', B) \\
    &= 2 \E f_B(A)^2 + 2 \E f_A(B)^2 \\
    &\hphantom{==} + 2 \E f_A(B)^2 + 2 \E f_B(A)^2 \\
    &= 4 \E f_B(A)^2 + 4 \E f_A(B)^2,
  \end{align*}
  as desired.
\end{proof}

\subsection{Proof of Equation \eqref{eq:entropy-bound}} \label{subsec:entropy-bound-proof}
\begin{proof}
  We proceed by induction on the dimension $d$, retracing the argument
  of \cite{T96}, \S 3. The base case $d = 1$ is immediate from Theorem
  \ref{thm:talagrand-orig}.

  Assume now that the inequality holds in $d - 1$ dimensions. For the
  inductive step, we can follow the same argument used to prove
  Theorem \ref{thm:talagrand-orig} (see \cite{T96}, \S 3). The
  argument proceeds by first comparing $Y$ to another $\R^d$-valued
  random variable $\hat{Y}$ sharing the first $d - 1$ coordinates of
  $Y$, but whose last coordinate is independently drawn from
  $\mathcal{N}(0, \sigma_d)$.

  Fix a $(d - 1)$-dimensional vector $\hat{x}$, and let $T_{\hat{x}}$
  denote a random variable distributed as the last coordinate of $Y$
  conditioned on the first $d - 1$ coordinates being equal to
  $\hat{x}$. Let $\hat{\rho}(\hat{x}) = \int_{-\infty}^\infty
  \rho(\hat{x}, t) dt$. Then, the density of $T_{\hat{x}}$ at $t$ is
  given by
  \[ \frac{f(\hat{x}, t) \cdot \rho(\hat{x}, t)}{f_{(d)}(\hat{x}, 0) \cdot \hat{\rho}(\hat{x})}. \]
  Noting that $\frac{\rho(\hat{x}, t)}{\hat{\rho}(\hat{x})}$ is the
  density of $\mathcal{N}(0, \sigma_d)$ at $t$, the one-dimensional
  case of Theorem \ref{thm:talagrand-orig} implies
  \begin{equation} \label{eq:xhat-w2}
    \W_2(T_{\hat{x}}, \mathcal{N}(0, \sigma_d))^2 \le 2 \sigma_d^2 \int_{-\infty}^\infty \frac{f(\hat{x}, t)}{f_{(d)}(\hat{x}, t)} \log \( \frac{f(\hat{x}, t)}{f_{(d)}(\hat{x}, t)} \) \frac{\rho(\hat{x}, t)}{\hat{\rho}(\hat{x})} \,dt.
  \end{equation}
  Since $T_{\hat{x}}$ and $\mathcal{N}(0, \sigma_d)$ have the same
  distributions as $Y$ and $\hat{Y}$ conditioned on $\hat{x}$, we may
  integrate \eqref{eq:xhat-w2} over $\hat{x}$ to obtain
  \begin{align*}
    \W_2(Y, \hat{Y})^2 &\le 2 \int_{\R^{d - 1}} \W_2(T_{\hat{x}}, \mathcal{N}(0, \sigma_d))^2 \cdot f_{(d)}(\hat{x}, 0) \hat{\rho}(\hat{x}) \,d\hat{x} \\
    &\le 2 \sigma_d^2 \int_{\R^{d - 1}} \int_{-\infty}^\infty f(\hat{x}, t) \log \( \frac{f(\hat{x}, t)}{f_{(d)}(\hat{x}, t)} \) \rho(\hat{x}, t) \,dt \,d\hat{x}. \\
    & = 2 \sigma_d^2 \cdot \E \( f(Z) \log \frac{f(Z)}{f_{(d)}(Z)} \) \\
    & = 2 \sigma_d^2 \cdot \bigg( \E \( f(Z) \log f(Z) \) - \E \( f_{(d)}(Z) \log f_{(d)}(Z) \) \bigg)
  \end{align*}

  Next, define $Y_{(d)}$ and $Z_{(d)}$ to be the projections onto the
  first $d - 1$ coordinates of $Y$ and $Z$, respectively. Note that
  the coupling of $Y$ to $\hat{Y}$ changes only $d$-th
  coordinate. Furthermore, the $d$-th coordinates of $\hat{Y}$ and $Z$
  are both distributed as $\mathcal{N}(0, \sigma_d)$ independent of
  the first $d - 1$ coordinates. Thus, a coupling of $Y_{(d)}$ to
  $Z_{(d)}$ induces a coupling of $\hat{Y}$ to $Z$ in which the last
  coordinate does not change. Consequently,
  \begin{equation} \label{eq:entropy-bound-induction-step}
    \W_2(Y, Z)^2 \le 2 \sigma_d^2 \cdot \bigg( \E \( f(Z) \log f(Z) \) - \E \( f_{(d)}(Z) \log f_{(d)}(Z) \) \bigg) + \W_2(Y_{(d)}, Z_{(d)})^2.
  \end{equation}
  Now, recall that the density of $Y_{(d)}$ at a point $\hat{x} \in
  \R^{d - 1}$ is $f_{(d)}(\hat{x}, 0) \cdot \hat{\rho}(\hat{x})$, and
  so applying the inductive hypothesis to $\W_2(Y_{(d)}, Z_{(d)})^2$
  yields
  \[ \W_2(Y_{(d)}, Z_{(d)})^2 \le 2 \sum_{k = 1}^{d - 1} \sigma_k^2 \cdot \E \( f_{[k]}(Z_{(d)}) \log f_{[k]}(Z_{(d)}) - f_{[k - 1]}(Z_{(d)}) \log f_{[k - 1]}(Z_{(d)}) \) \]
  \[ = 2 \sum_{k = 1}^{d - 1} \sigma_k^2 \cdot \E \( f_{[k]}(Z) \log f_{[k]}(Z) - f_{[k - 1]}(Z) \log f_{[k - 1]}(Z) \). \]
  Substituting into \eqref{eq:entropy-bound-induction-step}, we obtain
  \[ \W_2(Y, Z)^2 \le 2 \sum_{k = 1}^{d - 1} \sigma_k^2 \cdot \E \( f_{[k]}(Z) \log f_{[k]}(Z) - f_{[k - 1]}(Z) \log f_{[k - 1]}(Z) \), \]
  completing the induction.
\end{proof}

\subsection{Proof of Lemma \ref{lem:gaussian-expectation}} \label{subsec:gaussian-expectation-proof}
\begin{proof}
  Let $C_k = (2 \pi)^{-\frac{k}{2}}$. We have

  \small
  \[ \E \left[ \exp\( a\|Z\|_\Siginv^2 + b\langle Z, v \rangle_\Siginv \) \right] = \frac{C_k}{\sqrt{\det \Sigma}} \bigintsss_{\R^k} \exp\(- \( \frac{1}{2} - a \) \|x\|_\Siginv^2 + b \langle x, v \rangle_\Siginv \) dx \]
  \begin{eqnarray*}
  &=& \frac{C_k}{\sqrt{\det \Sigma}} \bigintsss_{\R^k} \exp\(- \( \frac{1}{2} - a \) \left\| x - \( \frac{b}{1 - 2a} \) v \right\|_\Siginv^2 + \frac{b^2}{2 - 4a}\|v\|_\Siginv^2 \) dx \\
  &=& \exp\( \frac{b^2}{2 - 4a} \|v\|_\Siginv^2 \) \cdot \frac{C_k}{\sqrt{\det \Sigma}} \bigintsss_{\R^k} \exp\(- \frac{1 - 2a}{2} \left\| x - \( \frac{b}{1 - 2a} \) v \right\|_\Siginv^2 \) dx \\
  &=& \exp\( \frac{b^2}{2 - 4a} \|v\|_\Siginv^2 \) \cdot \frac{C_k}{\sqrt{\det \Sigma}} \bigintsss_{\R^k} \exp\(- \frac{1 - 2a}{2} \left\| x  \right\|_\Siginv^2 \) dx \\
  &=& \exp\( \frac{b^2}{2 - 4a} \|v\|_\Siginv^2 \) \cdot \frac{C_k}{\sqrt{\det \Sigma}} \bigintsss_{\R^k} \exp\(- \frac{1}{2} \left\| y \right\|_\Siginv^2 \) \frac{dy}{\sqrt{(1 - 2a)^k}} \quad (y := \sqrt{1 - 2a} \cdot x) \\
  &=& \exp\( \frac{b^2}{2 - 4a} \|v\|_\Siginv^2 \) \cdot \( \frac{1}{1 - 2a} \)^{k/2} \cdot \frac{C_k}{\sqrt{\det \Sigma}} \bigintsss_{\R^k} \exp\(- \frac{1}{2} \left\| y \right\|_\Siginv^2 \) dy \\
  &=& \exp\( \frac{b^2}{2 - 4a} \|v\|_\Siginv^2 \) \cdot \( \frac{1}{1 - 2a} \)^{k/2}. \\
  \end{eqnarray*}
  \normalsize
\end{proof}

\end{document}